\newtheorem{thm}{Theorem}
\newtheorem{defn}{Definition}
\newtheorem{lemma}{Lemma}
\newtheorem{pro}{Proposition}
\newtheorem{rk}{Remark}
\newtheorem{cor}{Corollary}
\numberwithin{equation}{section} \setcounter{tocdepth}{1}
\newcommand*{\rom}[1]{\expandafter\@slowromancap\romannumeral #1@}
\begin{document}
\title[On Gibbs measure of HC model]{A HC model with countable set of spin values: uncountable set of Gibbs measures}

\author{U.~A.~Rozikov, F.~H.~Haydarov}

\address{U.~A.~Rozikov, F.~H.~Haydarov}

\address{Institute of mathematics,
9, University str. Tashkent, 100174, Uzbekistan,}

\address{AKFA University, 264, Milliy Bog street,  Yangiobod QFY, Barkamol MFY,
      Kibray district, 111221, Tashkent region, Uzbekistan,}
      
\address{National University of Uzbekistan,
 University str 4 Olmazor district, Tashkent, 100174, Uzbekistan.}

\email {rozikovu@mail.ru, haydarov\_imc@mail.ru}

\begin{abstract} We consider a hard core (HC) model with a countable set  $\mathbb{Z}$ of spin values on the Cayley tree. This model is defined by a countable set of parameters $\lambda_{i}>0, i \in \mathbb{Z}\setminus\{0\}$. For all possible values of parameters, we give limit points of the dynamical system generated by a function which describes the consistency condition for finite-dimensional measures. Also, we prove that every periodic Gibbs measure for the given model is either translation-invariant or periodic with period two. Moreover, we construct uncountable set of Gibbs measures for this HC model.
\end{abstract}
\maketitle

{\bf Mathematics Subject Classifications (2010).} 82B05, 82B20
(primary); 60K35 (secondary)

{\bf{Key words.}} Cayley tree, Gibbs measure, HC model, dynamics, Bleher-Ganikhodjaev construction.

\section{Introduction}




In the theory of Gibbs measures important problems are related to
existence and non-uniqueness of such measures.
The existence is known for models with a finite number of spin values,
but it depends on Hamiltonian of the system in case of a countable set of spin values (see \cite{georgii, 2}).


 There are papers devoted to the study of (gradient) Gibbs measures for models with an infinite set of spin values. For gradient Gibbs measures of gradient potentials on Cayley tree see \cite{9,  a, b, c} and references therein. In \cite{5} the uniqueness of the translation-invariant Gibbs measure for the antiferromagnetic Potts model with a countable set of spin values and a nonzero external field was shown. In  \cite{6}  for this Potts model the  Poisson measures, which are Gibbs measures, were described.

 HC models arise in statistical mechanics, combinatorics, and neural networks \cite{12, 13}. In \cite{14}, Mazel and Suhov introduced and studied the HC model on a $d$-dimensional lattice $\mathbb{Z}^{d}$. Many papers are devoted to the study of Gibbs measures for HC models with a finite number of states on the Cayley tree (see \cite{12}, \cite{16}, \cite{13}, \cite{15}, \cite{5} and the references therein).

In \cite{24}, for the first time a HC model with a countable number of spin values is considered. This model is defined by a countable set of parameters $\left\{\lambda_{j}\right\}_{j \in \mathbb{Z}}$. In the paper, the exact value of the parameter is found, which is the sum of the series obtained from the sequence of parameters $\left\{\lambda_{j}\right\}_{j \in \mathbb{Z}}$ such that for $0<\Lambda \leq \Lambda_{c r}$ there is exactly one periodic Gibbs measure $\mu_{0}$, which is translation invariant, and for $\Lambda>\Lambda_{c r}$ there are exactly three periodic measures Gibbs $\mu_{0}, \mu_{1}, \mu_{2}$, where the measures $\mu_{1}$ and $\mu_{2}$ are periodic (not translation-invariant) Gibbs measures with period two.

In the present paper, we continue investigation of the model considered in \cite{24}. For all possible parameters $\left\{\lambda_{j}\right\}_{j \in \mathbb{Z}}$, we give all limit points of the dynamical system generated by a function which describes the consistency condition for finite-dimensional Gibbs distributions. It is known (see \cite{Zach}) that there is one-to-one correspondence between   Gibbs measure and normalisable boundary laws. For our model we show that any boundary law is normalisable. This result then applied to prove that every periodic Gibbs measure for the given model is either translation-invariant or periodic with period two, since such measures are completely studied in \cite{24} our result says that there is no any other periodic Gibbs measure. In addition, by adapting to our model the (well known for Ising model) Bleher-Ganikhodjaev construction (see \cite{BG}) we construct an uncountable set of Gibbs measures which correspond to an uncountable set of normalisable boundary laws.

\section{Preliminaries}

The Cayley tree $\Im^{k}=(V, L)$ of order $k \geq 1$ is an infinite tree, i.e. graph without cycles, each vertex of which has exactly $k+1$ edges. Here $V$ is the set of vertices of $\Im^{k}$ va $L$ is the set of its edges. For $l \in L$ its endpoints $x, y \in V$ are called nearest neighbor vertices and denoted by $l=\langle x, y\rangle$.

For a fixed $x^{0} \in V$ we put
$$
W_{n}=\left\{x \in V \mid d\left(x, x^{0}\right)=n\right\}, \quad V_{n}=\bigcup_{m=0} ^{n} W_{m}, $$
%
where $d(x, y)$ is the distance between the vertices $x$ and $y$ on the Cayley tree, i.e. number of edges of the shortest path connecting vertices $x$ and $y$.


 The set $S(x)$ of direct successors of the vertex $x$ is defined as follows. If $x \in W_{n}$ then
$$
S(x)=\left\{y_{i} \in W_{n+1} \mid d\left(x, y_{i}\right)=1, i=1,2, \ldots, k \right\}.
$$

Let us consider the HC model of nearest neighbors with a countable number of states on the Cayley tree. The configuration $\sigma=\{\sigma(x) \mid x \in V\}$ on the Cayley tree is given as a function from $V$ to the set $\mathbb{Z}$, i.e. in this model, each vertex $x$ is assigned one of the values $\sigma(x) \in \mathbb{Z}$, where $\mathbb{Z}$ is the set of integer numbers.


Consider the set $\mathbb{Z}$ as the set of vertices of some infinite graph $G$. Using the graph $G$, we define a $G$-admissible configuration as follows: a configuration $\sigma$ is called a $G$-admissible configuration on a Cayley tree if $\{\sigma(x), \sigma(y)\}$  is an edge of the graph $G$ for any nearest neighbors $x, y$ from $V$.

Denote the set of $G$-admissible configurations by $\Omega^{G}$.

The activity set \cite{16} for the graph $G$ is the bounded function $\lambda: G \mapsto \mathbb{R}_{+}$ ($\mathbb{R}_{+}$ is the set of positive real numbers). The value $\lambda_{i}$ of the function $\lambda$ at the vertex $i \in \mathbb{Z}$ is called its ``activity".

For given $G$ and $\lambda$, we define the $G-$ HC-model Hamiltonian as
\begin{equation}\label{1}
H_{G}^{\lambda}(\sigma)= \begin{cases}J \sum\limits_{x \in V} \ln \lambda_{\sigma(x)}, & \text { if } \sigma \in \Omega^{G}, \\ +\infty, & \text { if } \sigma \notin \Omega^{G},\end{cases}
\end{equation}
where $J \in \mathbb{R}$.

Let $L(G)$ be the set of edges of the graph $G$, denote $A \equiv A^{G}=\left(a_{i j}\right)_{i, j \in \mathbb Z}$ by the adjacency matrix of $G$, i.e.
$$
a_{i j}=a_{i j}^{G}=\left\{\begin{array}{lll}
1 & \text { if } & \{i, j\} \in L(G), \\
0 & \text { if } & \{i, j\} \notin L(G).
\end{array}\right.
$$

%

%
%

\begin{defn}\label{Definition 3} (see chapter 12 of \cite{georgii}) A family of vectors $l=\left\{l_{xy}\right\}_{\langle x, y) \in L}$ with $l_ {xy}=$ $\left\{l_{xy}(i): i \in \mathbb{Z}\right\} \in(0, \infty)^{\mathbb{Z}}$ is called the boundary law for the Hamiltonian (\ref{1}) if

1) for each $\langle x, y\rangle \in L$ there exists a constant $c_{x y}>0$ such that the consistency equation
\begin{equation}\label{5}
l_{xy}(i)=c_{xy} \lambda_{i} \prod_{z \in \partial x \backslash\{y\}} \sum_{j \in \mathbb{Z}} a_{ij} l_{zx}(j)
\end{equation}
holds for any $i \in \mathbb{Z}$, where $\partial x$ is the set of nearest neighbors of $x$.

2) The boundary law $l$ is said to be normalisable if and only if
\begin{equation}\label{6}
\sum_{i \in \mathbb{Z}}\left(\lambda_{i} \prod_{z \in \partial x} \sum_{j \in \mathbb{Z}} a_{ij} l_{zx} (j)\right)<\infty
\end{equation}
for all $x \in V$.
\end{defn}
Let us now give a general setting of correspondence between boundary laws and Gibbs measures.  To do this consider a nearest-neighboring interaction potential $\Phi=(\Phi_b)_b$, where
$b=\langle x,y \rangle$ is an edge,  define symmetric transfer matrices $Q_b$ by
\begin{equation}\label{Qd}
	Q_b(\omega_b) = e^{- \big(\Phi_b(\omega_b) + | \partial x|^{-1} \Phi_{\{x\}}(\omega_x) + |\partial y |^{-1} \Phi_{\{y\}} (\omega_y) \big)},
\end{equation}
where $\omega_b=\{\omega_x,\omega_y\}$, $\partial x$ is the set of all nearest-neighbors of $x$ and $|S|$ denotes the number of elements of the set $S$.

For a finite subset $\Lambda\subset V$ define the (Markov) Gibbsian specification as
$$
\gamma_\Lambda^\Phi(\sigma_\Lambda = \omega_\Lambda | \omega) = (Z_\Lambda^\Phi)(\omega)^{-1} \prod_{b \cap \Lambda \neq \emptyset} Q_b(\omega_b).
$$

\begin{thm}\label{tz} \cite{Zach}
	For any Gibbsian specification $\gamma$ with associated family of transfer matrices $(Q_b)_{b \in L}$  we have
	\begin{enumerate}
		\item Each {\it normalisable} boundary law $(l_{xy})_{x,y}$ for $(Q_b)_{b \in L}$ defines a unique Gibbs measure\footnote{Also is called tree-indexed Markov chain.} $\mu$ (corresponding to $\gamma$)  via the equation given for any connected set $\Lambda \subset V$
		\begin{equation}\label{BoundMC}
			\mu(\sigma_{\Lambda \cup \partial \Lambda}=\omega_{\Lambda \cup \partial \Lambda}) = (Z_\Lambda)^{-1} \prod_{y \in \partial \Lambda} l_{y y_\Lambda}(\omega_y) \prod_{b \cap \Lambda \neq \emptyset} Q_b(\omega_b),
		\end{equation}
		where for any $y \in \partial \Lambda$, $y_\Lambda$ denotes the unique nearest-neighbor of $y$ in $\Lambda$.
		\item Conversely, every Gibbs measure $\mu$ admits a representation of the form (\ref{BoundMC}) in terms of a {\it normalisable} boundary law (unique up to a constant positive factor).
	\end{enumerate}
\end{thm}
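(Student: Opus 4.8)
The plan is to establish the two directions separately, in each case using the tree structure to reduce everything to the one-step transfer relation encoded by the consistency equation (\ref{5}).

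For direction (1), starting from a normalisable boundary law $(l_{xy})$, I would first check that for every finite connected $\Lambda \subset V$ the right-hand side of (\ref{BoundMC}) is a genuine probability measure on the finite configuration space $\mathbb{Z}^{\Lambda \cup \partial \Lambda}$. Positivity is immediate since $l_{xy}(i)>0$ and $Q_b>0$; finiteness of the normalising constant $Z_\Lambda$ is exactly where normalisability (\ref{6}) enters, guaranteeing that the sum over configurations converges. The core step is to verify Kolmogorov consistency of the family $\{\mu_\Lambda\}$: enlarging $\Lambda$ by one boundary vertex and summing out that vertex must reproduce the smaller marginal. On a tree the summed vertex is attached to the rest of the system through a single edge, so this reduction collapses precisely to the fixed-point relation (\ref{5}) together with the definition (\ref{Qd}) of $Q_b$. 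Once consistency is confirmed, the Kolmogorov extension theorem yields a unique measure $\mu$ on $\mathbb{Z}^V$.

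The remaining part of direction (1) is to show that $\mu$ is Gibbs for $\gamma$, i.e. that its conditional distribution on a finite $\Lambda$ given the outside configuration coincides with $\gamma_\Lambda^\Phi$. I would compute $\mu(\sigma_\Lambda = \cdot \mid \sigma_{V \setminus \Lambda})$ directly from (\ref{BoundMC}) by taking a large $\Lambda' \supset \Lambda$ and letting the outer boundary recede; the boundary-law factors on the outermost layer cancel between numerator and denominator, leaving exactly the product $\prod_{b \cap \Lambda \neq \emptyset} Q_b$ normalised over $\omega_\Lambda$, which is the DLR kernel. Uniqueness of $\mu$ for a given boundary law follows because its finite-dimensional marginals are completely pinned down by (\ref{BoundMC}). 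For the converse direction (2), given any Gibbs measure $\mu$, I would reconstruct the boundary law from the single-edge conditional structure: because $\mu$ is a nearest-neighbour Gibbs measure on a tree it is a Markov field, so for each oriented edge $\langle x,y\rangle$ the conditional law of the spins in the branch hanging off $x$ away from $y$, given $\sigma(x)=i$, is well defined, and setting $l_{xy}(i)$ proportional to the appropriate conditional weight recovers a family satisfying (\ref{5}), with the constants $c_{xy}$ absorbing the normalisations. Normalisability (\ref{6}) is then automatic, being equivalent to finiteness of the one-site marginal of the probability measure $\mu$, and uniqueness up to a positive multiplicative constant reflects the gauge freedom $l_{xy}(i) \mapsto c\, l_{xy}(i)$ that leaves (\ref{BoundMC}) invariant.

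The main obstacle I anticipate is the countable spin space: in contrast to the finite-state case, none of the sums are automatically finite, so every step---finiteness of $Z_\Lambda$, the interchange of summation in the consistency check, and the verification that the projective limit is a genuine probability measure rather than an infinite or defective one---must be justified through the normalisability hypothesis (\ref{6}). Keeping careful track of exactly where (\ref{6}) is invoked, and confirming that it is both sufficient (direction 1) and necessary (direction 2), is the delicate analytic heart of the argument.
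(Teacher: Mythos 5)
Two remarks on the comparison itself before the substance: the paper contains \emph{no} proof of Theorem \ref{tz} --- it is quoted from \cite{Zach} (see also Chapter 12 of \cite{georgii}) --- so your attempt can only be measured against the proof in those sources. Measured that way, your direction (1) is sound and follows the standard route: normalisability (\ref{6}) is exactly what makes $Z_\Lambda$ finite; consistency of the family (\ref{BoundMC}) under enlarging $\Lambda$ reduces, because a boundary vertex of a tree is attached to $\Lambda$ through a single edge, to the boundary-law equation (\ref{5}); the Kolmogorov extension theorem then produces $\mu$, and the ratio computation yields the DLR property. That is essentially Zachary's argument for this half of the theorem.

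The genuine gap is in your direction (2), and it is conceptual rather than technical. You invoke only the Markov \emph{field} property of a nearest-neighbour Gibbs measure, but the representation (\ref{BoundMC}) characterises the strictly smaller class of tree-indexed Markov \emph{chains}: measures for which, conditionally on $\sigma(x)$, the branches hanging off $x$ are independent. The distinction between these two classes is precisely the subject of Zachary's paper, and it is not vacuous for the model at hand: if $\|\lambda\|>\Lambda_{cr}$, the mixture $\frac12(\mu_1+\mu_2)$ of the two period-two measures of Theorem \ref{thm2} is a Gibbs measure (the set of Gibbs measures is convex) and a Markov field, yet it satisfies no identity of the form (\ref{BoundMC}): the right-hand side of (\ref{BoundMC}) factorises over the branches at each vertex, whereas under the mixture the configuration deep inside one branch reveals whether one is ``in'' $\mu_1$ or $\mu_2$ and thereby biases the other branches. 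Consequently your step ``setting $l_{xy}(i)$ proportional to the appropriate conditional weight recovers a family satisfying (\ref{5})'' fails for such $\mu$; no choice of weights can work, so normalisability is not the only issue and the claim that (\ref{6}) is \emph{necessary} for all Gibbs measures cannot be salvaged. Statement (2) is true only when ``Gibbs measure'' is read as ``tree-indexed Markov chain,'' which is what the paper's footnote tacitly does and what \cite{Zach} actually proves (equivalently, one may restrict to extreme Gibbs measures, which are automatically chains). With that hypothesis your plan does go through, but the clean construction of the boundary law is from the edge marginals, $\mu(\sigma_x=i,\sigma_y=j)\propto l_{xy}(i)\,Q_b(i,j)\,l_{yx}(j)$ (the chain property guarantees such a factorisation), after which (\ref{5}) holds and normalisability follows from the fact that $\mu$ is a probability measure, as you say.
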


Assuming $l_{x y}(0) \equiv 1$ (normalization at 0 ), from (\ref{5}) we obtain
\begin{equation}\label{7}
l_{xy}(i)=\frac{\lambda_{i}}{\lambda_{0}} \prod_{z \in \partial x \backslash\{y\}} \frac{a_{i 0}+ \sum_{j \in \mathbb{Z}_{0}} a_{ij} l_{zx}(j)}{a_{00}+\sum_{j \in \mathbb{Z}_{0}} a_{0 j} l_{zx}(j)}.
\end{equation}

In this paper we consider a {\bf concrete graph} $G^{\ast}$ with $a_{i 0}=1$ for all $i \in \mathbb{Z}$ and $a_{im}=0$ for all $i, m \in \mathbb{Z}_ {0}$.\\

The corresponding admissible configuration satisfies $\sigma(x) \sigma(y)=0$ for any nearest neighbor $\langle x, y\rangle$ from $V$, i.e. if a vertex $x$ has a spin value $\sigma(x)=0$, then we can put any value from $\mathbb{Z}$ on neighboring vertices, if the vertex $x$ contains any value from $\mathbb{Z}_{0}$, then we put only zeros on the neighboring vertices.

Given a boundary law $l_{xy}(i)$, we define $z_{i,x}=l_{xy}(i)$ when $x$ is  direct successor of $y$, i.e. $x\in S(y)$, then (\ref{7})  can be written as (see \cite{18})

\begin{equation}\label{8}
z_{i, x}=\lambda_{i} \prod_{y \in S(x)} \frac{1}{1+\sum_{j \in \mathbb{Z}_{0}} z_{j , y}}, \quad i \in \mathbb{Z}_{0}.
\end{equation}
Thus the investigation of the Gibbs measures for Hamiltonian (\ref{1}) in case of graph $G^{\ast}$ is reduced to finding solutions of (\ref{8}). In the following sections we give several solutions of (\ref{8}).

\section{A dynamical system corresponding to (\ref{8})}
Denote
$$\mathcal{V}:=\left\{z_x=\left(\ldots z_{-2,x}, z_{-1,x}, z_{0,x}, z_{1,x}, z_{2, x}, \ldots\right) \mid z_{i,x} \in \mathbb{R}, \ z_{0,x}=1, \ \forall x\in V \right\}, $$
$$\mathcal{V}^+:=\left\{z_x\in \mathcal{V} \mid z_{i,x}>0\right\},$$
$$\mathcal{L}:=\left\{z_x\in \mathcal{V} \mid \sum_{i \in \mathbb{Z}}\left|z_{i, x}\right|<\infty\right\}, \quad \mathcal{L}^+:=\mathcal{V}^+\cap\mathcal{L}.$$

Hence, (\ref{8}) can be written as
\begin{equation}\label{9}
	z_x=\lambda \prod_{y\in S(x)}\frac{1}{1+\|z_y\|}, \quad z_x\in \mathcal{V}^+, \ \forall x\in V, \end{equation}
where $\lambda=(..., \lambda_{-2}, \lambda_{-1}, 1, \lambda_1, \lambda_2, ...)\in \mathcal{V}^+$ and $\|z_y\|:=\sum_{i\in\mathbb{Z}}\|z_{i,y}\|.$

Denote
$$S_{1}(x)=\left\{y \in V:\langle x, y\rangle\right\}, x \in V \ \textrm{and} \ x_{\downarrow}=S_1(x)\setminus S(x).$$

\begin{lemma}\label{n1}
	$\lambda \in \mathcal{L}^+$ if and only if $z_x\in \mathcal{L}^+$.
\end{lemma}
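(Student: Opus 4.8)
The plan is to exploit that, for each fixed vertex $x$, the dependence of the recursion (\ref{9}) on the spin index $i$ decouples completely from the factor coming from the successors. Reading off (\ref{8}), for every $i\in\mathbb{Z}_{0}$ one has $z_{i,x}=\lambda_{i}\,c_{x}$, where
$$c_{x}:=\prod_{y\in S(x)}\frac{1}{1+\|z_{y}\|}$$
is a scalar that does not depend on $i$; together with the normalisation $z_{0,x}=1=\lambda_{0}$ this means that, apart from its single $0$-coordinate, the sequence $z_{x}$ is just a positive rescaling of $\lambda$. Hence the whole lemma reduces to checking that $c_{x}\in(0,\infty)$ (the argument being uniform in $x$, so it settles the equivalence at every vertex).

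First I would bound $c_{x}$ from above. Since every $\|z_{y}\|\geq 0$, each factor $(1+\|z_{y}\|)^{-1}$ lies in $(0,1]$, and $S(x)$ has exactly $k$ elements, so $c_{x}$ is a finite product of numbers in $(0,1]$ and therefore $c_{x}\leq 1<\infty$. For the lower bound I would invoke the standing hypothesis $z_{x}\in\mathcal{V}^{+}$: positivity gives $z_{i,x}>0$ for every $i$, and since $\lambda_{i}>0$ is finite we may solve $c_{x}=z_{i,x}/\lambda_{i}>0$. Equivalently, $c_{x}=0$ would force $z_{i,x}=0$ for all $i\in\mathbb{Z}_{0}$, contradicting $z_{x}\in\mathcal{V}^{+}$; this is exactly the step where positivity is indispensable, as it rules out the degenerate situation in which some successor has divergent norm $\|z_{y}\|=+\infty$.

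With $0<c_{x}<\infty$ in hand the equivalence is immediate. Summing the factorised identity over $i\in\mathbb{Z}_{0}$ gives
$$\sum_{i\in\mathbb{Z}}\big|z_{i,x}\big|=1+\sum_{i\in\mathbb{Z}_{0}}z_{i,x}=1+c_{x}\sum_{i\in\mathbb{Z}_{0}}\lambda_{i},$$
so the left-hand side is finite precisely when $\sum_{i\in\mathbb{Z}_{0}}\lambda_{i}<\infty$, i.e. precisely when $\sum_{i\in\mathbb{Z}}\lambda_{i}<\infty$ (the $i=0$ term contributing the finite constant $1$ to both totals). Since $\lambda$ and $z_{x}$ already lie in $\mathcal{V}^{+}$, this finiteness statement is exactly the asserted equivalence $\lambda\in\mathcal{L}^{+}$ if and only if $z_{x}\in\mathcal{L}^{+}$.

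I expect the only genuinely delicate point to be the strict positivity $c_{x}>0$: the upper bound and the final summation are routine computations, whereas $c_{x}>0$ is where one must use $z_{x}\in\mathcal{V}^{+}$ to exclude the collapse of $z_{x}$ to the zero sequence caused by a successor with infinite norm.
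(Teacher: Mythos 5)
Your proof is correct and takes essentially the same approach as the paper's: both exploit that a solution satisfies $z_x=c_x\lambda$ with $c_x=\prod_{y\in S(x)}(1+\|z_y\|)^{-1}\in(0,1]$ (the paper writes this as $\|z_x\|\prod_{y\in S(x)}(1+\|z_y\|)=\|\lambda\|$), where the upper bound $c_x\le 1$ gives one direction and strict positivity of $c_x$ — ruled in by $z\in\mathcal{V}^+$, since a successor with infinite norm would collapse the vector to $\mathbf{0}$ — gives the other. The only cosmetic difference is that the paper detects the degeneracy at the parent of the vertex with infinite norm, whereas you detect it at $x$ itself; the mechanism is identical.
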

\begin{proof}
If $\|\lambda\|=\infty$ then by $\left\|z_{x}\right\| \prod_{y \in S(x)}\left(1+\left\|z_{y}\right\|\right)=\|\lambda\|$,
there exists $v \in S(x) \cup\{x\}$ such that $\left\|z_{v}\right\|=\infty .$
Thus, $z_{\downarrow v}=\textbf{0}\notin \mathcal{L}^+$, where $\textbf{0}$ is a zero vector. Hence, it is sufficient to consider the case $\lambda \in \mathcal{L}^+$. Since $$\left\|z_{x}\right\|=\|\lambda\| \cdot \prod_{y \in S(x)} \frac{1}{1+\left\|z_{y}\right\|} \leqslant\|\lambda\|$$ we get $z_{x} \in \mathcal{L}^+$ for all $x \in V$.
\end{proof}

We define an operator $W: \mathcal{L}^+ \rightarrow\langle\lambda\rangle:=\{c\lambda: c\in \mathbb{R}\}$ by
\begin{equation}\label{operator}W(z)=\lambda \cdot \frac{1}{(1+\|z\|)^{k}}, \ \lambda \in \mathcal{L}^+.\end{equation}
For each $z_{0} \in \mathcal{L}^{+}$, we study $\lim_{n \rightarrow \infty} W^{(n)}\left(z_{0}\right)$, where
$$W^{(n)}=\underbrace{W\circ W\circ ... \circ W}_{n \ \textrm{times}}.$$\\
Denote $$W^{(n)}\left(z_{0}\right)=z^{(n)}=\alpha_{n} \lambda \in\langle\lambda\rangle.$$
Note that
$$z_0=\alpha_0 \lambda.$$
From
$$\alpha_{n+1}\lambda=z_{n+1}=W^{(n+1)}\left(z_{0}\right)
=W\left(z_{n}\right)=W\left(\alpha_{n} \lambda\right)$$ we obtain $$\alpha_{n+1}\lambda=W\left(\alpha_{n} \lambda\right)
=\lambda \cdot \frac{1}{\left(1+\left\|\lambda \alpha_{n}\right\|\right)^{k}}=\lambda \cdot \frac{1}{\left(1+\alpha_{n}\|\lambda\|\right)^{k}}.
$$
Consequently,
\begin{equation}\label{bob1}\quad \alpha_{n+1}=\frac{1}{\left(1+\alpha_{n}\|\lambda\|\right)^{k}}=: f_{\lambda}\left(\alpha_{n}\right).\end{equation}
Hence, we now study limit of $\alpha_{n}$.
It's easy to check that
$f_{\lambda}\left(x\right)$ is a monotone decreasing function. Then
\begin{equation}\label{bob2} g_{\lambda}\left(x\right):=(f_{\lambda}\circ f_{\lambda})\left(x\right)\end{equation} is a monotone increasing function.
If $\alpha_0\leq \alpha_2$ (the case $\alpha_0\geq \alpha_2$ is similar) then
$g_{\lambda}\left(\alpha_0\right)\leq g_{\lambda}\left(\alpha_2\right)\ \Rightarrow \ \alpha_2\leq \alpha_4.$
Continuing this process we obtain that $\{\alpha_{2n}\}_{n=1}^{\infty}$ is the monotone increasing sequence ($\{\alpha_{2n-1}\}_{n=1}^{\infty}$ is the monotone decreasing sequence). Taking into account both cases we can say $\{\alpha_{2n}\}_{n=1}^{\infty}$ is the monotone sequence.  Similarly, we can show $\{\alpha_{2n-1}\}_{n=1}^{\infty}$ is the monotone sequence. It is easy to check that $0<\alpha_{n}<1$, i.e. $\left\{\alpha_{n}\right\}_{n=1}^{\infty}$ is the bounded sequence.
Hence, there exist limits of sequences $\{\alpha_{2n-1}\}_{n=1}^{\infty}$ and $\{\alpha_{2n}\}_{n=1}^{\infty}$. Put $$\lim _{n \rightarrow \infty} \alpha_{2 n-1}=A, \ \lim _{n \rightarrow \infty} \alpha_{2 n}=B.$$
Since $\alpha_{n+1}=\frac{1}{\left(1+\alpha_{n} \cdot\|\lambda\|\right)^{k}}, \ n\in \mathbb{N}$ one gets
\begin{equation}\label{eq1} A=f_{\lambda}(B), \ B=f_{\lambda}(A). \end{equation}

We note that if $A\neq B$ and $\|\lambda\|\leq \Lambda_{c r}(k)=\frac{k^{k}}{(k-1)^{k+1}}$ then the last equation has not any positive solution (see \cite{24}). For the case $A=B$ and $\|\lambda\|\leq\Lambda_{c r}(k)$ there is a unique positive solution $\xi$. If $A\neq B$ and $\|\lambda\|>\Lambda_{c r}(k)$ then equation (\ref{eq1}) has two positive solutions $(\alpha^{\ast}, \beta^{\ast})$ and $(\beta^{\ast}, \alpha^{\ast})$, $\alpha^{\ast}<\beta^{\ast}$. For the case $A=B$ and $\|\lambda\|>\Lambda_{c r}(k)$ there is a unique positive solution $\xi$. After simple analyzing, the function $g_{\lambda}(x)$ is convex on $(0, \alpha^{\ast}]\cup [\xi, 1]$ and concave on $(\alpha^{\ast}, \xi)$. Also, if $z_0=\alpha_0\lambda$, with $\alpha_0\in (0, \xi)$ then $\lim_{n\rightarrow \infty}W^{(2n)}(z_0)=\alpha^{\ast}$ and $\lim_{n\rightarrow \infty}W^{(2n-1)}(z_0)=\beta^{\ast}$. For the case $\alpha_0=\xi$ we have $\lim_{n\rightarrow \infty}W^{(n)}(z_0)=\xi$. If $\alpha_0\in (\xi, 1]$ then $\lim_{n\rightarrow \infty}W^{(2n)}(z_0)=\beta^{\ast}$ and $\lim_{n\rightarrow \infty}W^{(2n-1)}(z_0)=\alpha^{\ast}$.\\

All of the above mentioned, we can conclude that:

\begin{thm}\label{thm4} Let $k\geq 2$ and $\lambda\in \mathcal{L}^+$.
	\begin{enumerate}
		\item If $\|\lambda\|\leq \Lambda_{cr}$ then $\lim_{n\rightarrow \infty} W^{(n)}(z_0)=\xi \lambda$, for all $z_0\in \mathcal{L}^+$.
		\item If $\|\lambda\|> \Lambda_{cr}$ and $z_0=\alpha_0\lambda$, with $\alpha_0\in (0,\xi)$ then $$\lim_{n\rightarrow \infty} W^{(2n)}(z_0)=\alpha^{\ast} \lambda, \ \lim_{n\rightarrow \infty} W^{(2n-1)}(z_0)=\beta^{\ast} \lambda.$$
		\item If $\|\lambda\|> \Lambda_{cr}$ and $z_0=\alpha_0\lambda$, with $\alpha_0\in (\xi, 1)$ then $$\lim_{n\rightarrow \infty} W^{(2n)}(z_0)=\beta^{\ast} \lambda, \ \lim_{n\rightarrow \infty} W^{(2n-1)}(z_0)=\alpha^{\ast} \lambda.$$
		\item If $\alpha_0=\xi$ then $\lim_{n\rightarrow \infty} W^{(n)}(z_0)=\xi \lambda$.
	\end{enumerate}
\end{thm}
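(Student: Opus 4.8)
The plan is to collapse the vector recursion \eqref{9} to the scalar map $f_\lambda$ of \eqref{bob1} and then analyze $f_\lambda$ together with its second iterate $g_\lambda=f_\lambda\circ f_\lambda$. Since by \eqref{operator} every image $W(z)$ already lies on the ray $\langle\lambda\rangle$, after one step the orbit is a scalar multiple of $\lambda$, and writing $W^{(n)}(z_0)=\alpha_n\lambda$ turns the problem into the study of $\lim_n\alpha_n$ for the iteration $\alpha_{n+1}=f_\lambda(\alpha_n)$. As $\lambda\in\mathcal{L}^+$ gives $\|\lambda\|<\infty$, any scalar limit $\alpha_n\to c$ transfers to $W^{(n)}(z_0)\to c\lambda$ in $\mathcal{L}^+$, so it suffices to locate the limits of $\{\alpha_n\}$.

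First I would record the structural facts already obtained for $f_\lambda$: it is strictly decreasing with values in $(0,1)$, so $g_\lambda$ is strictly increasing, and the even and odd subsequences $\{\alpha_{2n}\}$ and $\{\alpha_{2n-1}\}$ are bounded and monotone, hence convergent. Their limits $B=\lim\alpha_{2n}$ and $A=\lim\alpha_{2n-1}$ satisfy the coupled system \eqref{eq1}; equivalently $A$ and $B$ are fixed points of the increasing map $g_\lambda$, and every such limit is either the common value $\xi$ (a fixed point of $f_\lambda$) or one half of a genuine two-cycle of $f_\lambda$.

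Next I would count and locate the fixed points of $g_\lambda$. The equation $f_\lambda(x)=x$, that is $x(1+\|\lambda\|x)^k=1$, has a unique root $\xi\in(0,1)$ since its left-hand side increases strictly from $0$ to $+\infty$, and this $\xi$ is automatically a fixed point of $g_\lambda$. A short computation of $f_\lambda'(\xi)=-k\|\lambda\|\xi/(1+\|\lambda\|\xi)$ shows that $\Lambda_{cr}(k)=k^k/(k-1)^{k+1}$ is exactly the value of $\|\lambda\|$ at which $|f_\lambda'(\xi)|=1$. Hence for $\|\lambda\|\le\Lambda_{cr}$ one has $g_\lambda'(\xi)=f_\lambda'(\xi)^2\le1$ and, combined with the convexity/concavity pattern of the $S$-shaped $g_\lambda$ recorded above, $\xi$ is the only fixed point of $g_\lambda$; for $\|\lambda\|>\Lambda_{cr}$ the point $\xi$ becomes repelling and $f_\lambda$ acquires a two-cycle $\{\alpha^\ast,\beta^\ast\}$ with $\alpha^\ast<\xi<\beta^\ast$, so that $g_\lambda$ has exactly the three fixed points $\alpha^\ast,\xi,\beta^\ast$ (the solution count of \eqref{eq1} established in \cite{24}). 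I expect this step — the exact count of fixed points and their stability via the derivative at $\xi$ and the convex–concave–convex shape of $g_\lambda$ — to be the main obstacle.

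Finally I would read off the four assertions. Because $g_\lambda$ is increasing with the fixed points above, the sign of $g_\lambda(x)-x$ is fixed on each subinterval: positive on $(0,\alpha^\ast)\cup(\xi,\beta^\ast)$ and negative on $(\alpha^\ast,\xi)\cup(\beta^\ast,1)$; consequently every $g_\lambda$-orbit started in $(0,\xi)$ converges to $\alpha^\ast$ and every one started in $(\xi,1)$ converges to $\beta^\ast$, while $\xi$ is fixed. If $\|\lambda\|\le\Lambda_{cr}$, uniqueness of the fixed point forces $A=B=\xi$ for every starting point, giving (1). If $\|\lambda\|>\Lambda_{cr}$, the even subsequence is the $g_\lambda$-orbit of $\alpha_0$ and the odd one the $g_\lambda$-orbit of $\alpha_1=f_\lambda(\alpha_0)$; since $f_\lambda$ is decreasing with $f_\lambda(\xi)=\xi$ we have $\alpha_0<\xi\iff\alpha_1>\xi$. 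Thus $\alpha_0\in(0,\xi)$ yields $\alpha_{2n}\to\alpha^\ast$, $\alpha_{2n-1}\to\beta^\ast$, which is (2); $\alpha_0\in(\xi,1)$ yields the reversed limits, which is (3); and $\alpha_0=\xi$ gives $\alpha_n\equiv\xi$, which is (4). Scaling each scalar limit by $\lambda$ produces the stated convergence in $\mathcal{L}^+$.
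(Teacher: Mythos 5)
Your proposal is correct and follows essentially the same route as the paper: collapse the recursion to the scalar map $f_{\lambda}$ of (\ref{bob1}), use boundedness and monotonicity of the even/odd subsequences to obtain limits $A,B$ solving (\ref{eq1}), and then identify the limits through the fixed-point structure of $g_{\lambda}=f_{\lambda}\circ f_{\lambda}$. The only difference is in how the basin structure is justified: the paper invokes a convexity/concavity claim for $g_{\lambda}$ together with the solution count from \cite{24}, while you verify the threshold $\Lambda_{cr}$ via the derivative $f_{\lambda}'(\xi)$ and use the sign of $g_{\lambda}(x)-x$ between consecutive fixed points --- a cleaner fill-in of the step the paper labels ``after simple analyzing,'' but not a genuinely different argument.
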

\section{Each solution of (\ref{8}) is normalisable}
From above sections, all periodic solutions to the operator $W$ which defined in (\ref{operator}) is either translation invariant or periodic with period two. In addition, if $\|\lambda\|>\Lambda_{cr}$ then there is only one periodic solution with period two $(\alpha^{\ast}\lambda, \beta^{\ast}\lambda)$ such that $\alpha^{\ast}<\beta^{\ast}$, i.e.
$$W(\alpha^{\ast} \lambda)=\beta^{\ast} \lambda, \quad W(\beta^{\ast} \lambda)=\alpha^{\ast} \lambda.$$

Let $u=(...,u_{-2},u_{-1}, 0, u_1, u_2, ...),v=(...,v_{-2}, v_{-1}, 0, v_1, v_2, ...) \in \mathcal{L}^{+}$. We introduce partial ordered relation $\preceq$ on the set $\mathcal{L}^{+}$ such that $u\preceq v$ if $u_i\leq v_i$ for all $i\in \mathbb{Z}$.

\begin{pro}\label{Proposition 2.5.}  If $z_{x}$ is a solution to (\ref{9}) then
	$\alpha^{\ast} \lambda \preceq z_{x} \preceq \beta^{\ast} \lambda$, for any $x \in V$.\end{pro}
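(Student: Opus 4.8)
The plan is to reduce the tree system (\ref{9}) to a single scalar recursion and then to compare an arbitrary solution with the period-two orbit by an extremal (infimum/supremum) argument. First I would note that the right-hand side of (\ref{9}) is always a positive multiple of $\lambda$, so every solution has the form $z_x=t_x\lambda$ with $t_x=\prod_{y\in S(x)}(1+\|z_y\|)^{-1}>0$. Then $\|z_y\|=t_y\|\lambda\|$, and (\ref{9}) becomes the scalar system
\[
t_x=\prod_{y\in S(x)}\frac{1}{1+t_y\|\lambda\|},\qquad x\in V,
\]
the product having exactly $k$ factors. Because $\lambda_i>0$ for all $i$, the relation $\alpha^{\ast}\lambda\preceq z_x\preceq\beta^{\ast}\lambda$ is equivalent to the two scalar bounds $\alpha^{\ast}\le t_x\le\beta^{\ast}$, so it suffices to establish these for every $x$.

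Next I would establish a uniform a priori estimate. Each factor $(1+t_y\|\lambda\|)^{-1}$ lies in $(0,1)$, so $t_x<1$; inserting $t_y<1$ back into the product yields $t_x\ge(1+\|\lambda\|)^{-k}>0$. Hence $m:=\inf_{x\in V}t_x$ and $M:=\sup_{x\in V}t_x$ are finite and lie in $(0,1)$. Replacing $t_y$ by the bound $t_y\ge m$ (respectively $t_y\le M$) in each factor and using the monotonicity of $f_\lambda$ from (\ref{bob1}) gives the vertex-wise inequalities $t_x\le f_\lambda(m)$ and $t_x\ge f_\lambda(M)$; taking the supremum and infimum over $x$ produces
\[
M\le f_\lambda(m),\qquad m\ge f_\lambda(M).
\]

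I would then fold these into statements about the increasing map $g_\lambda=f_\lambda\circ f_\lambda$ of (\ref{bob2}). Since $f_\lambda$ is decreasing, applying it to $m\ge f_\lambda(M)$ yields $f_\lambda(m)\le g_\lambda(M)$, whence $M\le f_\lambda(m)\le g_\lambda(M)$; symmetrically $m\ge f_\lambda(M)\ge g_\lambda(m)$. Thus $M\le g_\lambda(M)$ and $m\ge g_\lambda(m)$. The final ingredient is the sign of $g_\lambda(t)-t$, already supplied by the analysis preceding Theorem \ref{thm4}: in $(0,1)$ the map $g_\lambda$ has exactly the fixed points $\alpha^{\ast}<\xi<\beta^{\ast}$, it lies above the diagonal on $(0,\alpha^{\ast})\cup(\xi,\beta^{\ast})$ and below it on $(\alpha^{\ast},\xi)\cup(\beta^{\ast},1)$. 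Therefore $M\le g_\lambda(M)$ forces $M\in(0,\alpha^{\ast}]\cup[\xi,\beta^{\ast}]$, so $M\le\beta^{\ast}$, while $m\ge g_\lambda(m)$ forces $m\in[\alpha^{\ast},\xi]\cup[\beta^{\ast},1)$, so $m\ge\alpha^{\ast}$. Hence $\alpha^{\ast}\le t_x\le\beta^{\ast}$ for all $x$, which is the assertion.

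The step I expect to be the crux is the extremal passage itself: one must guarantee, via the uniform bound $(1+\|\lambda\|)^{-k}\le t_x<1$, that $m$ and $M$ are honest numbers in $(0,1)$, and then check that the vertex-wise inequalities are preserved under $\inf$ and $\sup$ over the infinite vertex set $V$. After that, the decreasing/increasing interplay of $f_\lambda$ and $g_\lambda$ together with the known sign pattern of $g_\lambda-\mathrm{id}$ closes the argument, and no estimate beyond those already obtained in Section 3 is needed.
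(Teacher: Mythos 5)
Your proof is correct, and it closes the argument by a genuinely different mechanism than the paper. Both arguments begin identically: every solution of (\ref{9}) is a positive scalar multiple $z_x=t_x\lambda$, so the problem is the scalar one $t_x=\prod_{y\in S(x)}(1+t_y\|\lambda\|)^{-1}$, and monotonicity of $f_\lambda$ from (\ref{bob1}) drives everything. The paper then proceeds by \emph{iterative squeezing}: from the crude uniform bounds $f_\lambda(1)\le t_x\le f_\lambda(0)=1$ it feeds the bounds back through (\ref{9}) generation after generation, obtaining sandwich sequences of the form $h^{(2n)}(\alpha_\lambda)\le t_x\le h^{(2n)}(1)$ (with $\alpha_\lambda=f_\lambda(1)$ and $h$ essentially $f_\lambda$), and concludes by letting $n\to\infty$ and invoking the convergence statements of Theorem \ref{thm4}, under which the two sequences tend to $\alpha^{\ast}$ and $\beta^{\ast}$. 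You instead use an \emph{extremal principle}: set $m=\inf_x t_x$, $M=\sup_x t_x$, derive the self-consistency inequalities $M\le f_\lambda(m)$, $m\ge f_\lambda(M)$, fold them into $M\le g_\lambda(M)$ and $m\ge g_\lambda(m)$ for the increasing map $g_\lambda$ of (\ref{bob2}), and finish by the sign of $g_\lambda-\mathrm{id}$. Your route needs no passage to a limit, but it relies on two facts the paper never states explicitly: that the fixed points of $g_\lambda$ in $(0,1)$ are exactly $\alpha^{\ast},\xi,\beta^{\ast}$ (this does follow from the paper's classification of the solutions of (\ref{eq1}), since fixed points of $g_\lambda$ are precisely first components of solutions of that system), and the sign pattern of $g_\lambda-\mathrm{id}$ on the complementary intervals (this follows from the monotone convergence in Theorem \ref{thm4} because $g_\lambda$ is increasing and continuous); so in the end both proofs lean on the same Section 3 analysis, just through different corollaries of it. One tiny patch: from $t_x<1$ alone you cannot conclude $M<1$ (a supremum of numbers each below $1$ can equal $1$); either note that $M\le f_\lambda(m)<1$ because $m\ge(1+\|\lambda\|)^{-k}>0$, or observe that $g_\lambda(1)=f_\lambda(f_\lambda(1))<1$, so the inequality $M\le g_\lambda(M)$ also excludes $M=1$ and your conclusion $M\le\beta^{\ast}$ stands unchanged.
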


\begin{proof} Since $W(z_x)\in \langle \lambda \rangle$ we can write $z_x=\alpha_x\lambda, \ \alpha_x>0$. Then the equation (\ref{9}) can be written as
	$$\alpha_x\lambda=\lambda\prod_{y\in S(x)}\frac{1}{1+\alpha_y\|\lambda\|}.$$
	Consequently, we obtain
	$$\alpha_x=\prod_{y\in S(x)}\frac{1}{1+\alpha_y\|\lambda\|}\ \Rightarrow \ 0<\alpha_x<1.$$
	Since $f_{\lambda}(x)$ (defined in (\ref{bob1})) decreasing function, one gets
	$$\lambda\alpha_{\lambda}:=\frac{1}{(1+\|\lambda\|)^k}\lambda=f_{\lambda}(1)\lambda\preceq z_x\preceq \lambda f_{\lambda}(0)=\lambda, \ x\in V\setminus \{x^{0}\}.$$
	Then
	$$f_{\lambda}(\lambda)\lambda \preceq z_x\preceq f_{\lambda}(\lambda\alpha_{\lambda})\lambda.$$
	Iterating this argument we obtain
	$$
	\lambda h^{(2n)}(\alpha_{\lambda})\preceq z_x \preceq\lambda h^{(2n)}(1), \quad \lambda h^{(2n-1)}(1)\preceq z_x\preceq\lambda h^{(2n-1)}(\alpha_{\lambda}),
	$$
	where $h^{(n)}$ is its $n$th iteration of the mapping $f_{\lambda}(\lambda)\lambda$. By Theorem \ref{thm4} we have
	$$\lim_{n\rightarrow \infty}h^{(2n)}(1)=\lim_{n\rightarrow \infty}h^{(2n-1)}(\alpha_{\lambda})=\beta^{\ast}\lambda$$
	and
	$$\lim_{n\rightarrow \infty}h^{(2n-1)}(1)=\lim_{n\rightarrow \infty}h^{(2n)}(\alpha_{\lambda})=\alpha^{\ast}\lambda$$
	Hence, we obtain that $\alpha^{\ast} \lambda \preceq z_{x} \preceq \beta^{\ast} \lambda$, for any $x \in V$. \end{proof}

\begin{pro}\label{lemma new} Let $k\geq 2$ and $\lambda\in \mathcal{L}^+$. Then any boundary law $l=\left\{l_{xy}:=z_{x}\right\}_{\langle x, y\rangle \in L}$, for the Hamiltonian (\ref{1}) with graph $G^*$, is normalisable.\end{pro}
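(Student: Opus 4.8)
The plan is to unwind the definition of normalisability in equation (\ref{6}) and show that the relevant series converges by appealing to the two-sided bound just established in Proposition \ref{Proposition 2.5.}. First I would recall that for the concrete graph $G^*$ we have $a_{i0}=1$ for all $i\in\mathbb{Z}$ and $a_{im}=0$ for $i,m\in\mathbb{Z}_0$, so that for a fixed $i$ the inner sum $\sum_{j\in\mathbb{Z}}a_{ij}l_{zx}(j)$ collapses dramatically: if $i\in\mathbb{Z}_0$ the only surviving term is $j=0$, giving $l_{zx}(0)=1$, while if $i=0$ the sum becomes $a_{00}+\sum_{j\in\mathbb{Z}_0}a_{0j}l_{zx}(j)=1+\sum_{j\in\mathbb{Z}_0}z_{j,x}=1+\|z_x\|$ (using $a_{00}=0$ or $a_{00}=1$ consistently with the graph, and that $a_{0j}=1$ for $j\neq 0$). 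The point is that after this simplification the normalisability series in (\ref{6}) is, up to the product over $z\in\partial x$, governed by $\sum_{i\in\mathbb{Z}}\lambda_i$ together with bounded factors.

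Next I would make the estimate quantitative. Writing $z_x=\alpha_x\lambda$ as in the proof of Proposition \ref{Proposition 2.5.}, the inner products $\prod_{z\in\partial x}\sum_{j}a_{ij}l_{zx}(j)$ are, for each $i$, either a product of $1$'s (when $i\in\mathbb{Z}_0$) or a product of factors of the form $1+\|z_{zx}\|$ (when $i=0$). By Proposition \ref{Proposition 2.5.} we have $\alpha^*\lambda\preceq z_x\preceq\beta^*\lambda$, hence $\alpha^*\|\lambda\|\leq\|z_x\|\leq\beta^*\|\lambda\|$ for every vertex, so these factors are all bounded above by $(1+\beta^*\|\lambda\|)^{|\partial x|}=(1+\beta^*\|\lambda\|)^{k+1}$, a finite constant independent of $i$. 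Therefore the whole sum in (\ref{6}) is dominated by $C\sum_{i\in\mathbb{Z}}\lambda_i$ for an explicit finite constant $C$ depending only on $k$, $\beta^*$ and $\|\lambda\|$.

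The convergence of $\sum_{i\in\mathbb{Z}}\lambda_i$ is exactly the hypothesis $\lambda\in\mathcal{L}^+$, which by definition of $\mathcal{L}$ means $\sum_{i\in\mathbb{Z}}|\lambda_i|=\|\lambda\|<\infty$. Combining this with the uniform bound from the previous step gives that the series in (\ref{6}) is finite for every $x\in V$, which is precisely the normalisability condition in part 2) of Definition \ref{Definition 3}. Since by Lemma \ref{n1} the hypothesis $\lambda\in\mathcal{L}^+$ forces $z_x\in\mathcal{L}^+$ for all $x$, every solution $z_x$ of (\ref{9}) — equivalently every boundary law $l_{xy}=z_x$ — satisfies the bound, and the conclusion follows.

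I expect the main obstacle to be purely bookkeeping rather than conceptual: one must carefully track how the adjacency structure of $G^*$ makes the double sum $\sum_i\lambda_i\prod_z\sum_j a_{ij}l_{zx}(j)$ split into the $i=0$ contribution (where the product factors $1+\|z_{zx}\|$ appear) and the $i\in\mathbb{Z}_0$ contributions (where the products are trivial), and verify that the $i=0$ term alone is finite while the remaining series is controlled by $\|\lambda\|$. The genuinely substantive input — the uniform two-sided bound $\alpha^*\lambda\preceq z_x\preceq\beta^*\lambda$ — is already supplied by Proposition \ref{Proposition 2.5.}, so the remaining work is to assemble these pieces into the single finite dominating constant $C$.
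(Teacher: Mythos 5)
Your proof is correct, but it takes a genuinely different (and more elementary) route than the paper's. The paper does not split the sum in (\ref{6}) according to whether $i=0$ or $i\in\mathbb{Z}_0$; instead it uses the consistency equation to rewrite the normalisability sum as a bilinear form: setting $Q(i,j)=a_{ij}\lambda_i$, it shows
$\sum_{i}\bigl(\prod_{z\in\partial x}\sum_{j}Q(i,j)l_{zx}(j)\bigr)=c_{xy}^{-1}\sum_{i}\sum_{j}Q(i,j)l_{yx}(j)l_{xy}(i)$,
observes that for $G^*$ this double sum collapses to $z_{y,0}\sum_{i}z_{x,i}\lambda_i$, and then bounds it by $\beta^{*}z_{y,0}\sum_{i}\lambda_i^{2}<\infty$ using Proposition \ref{Proposition 2.5.} coordinatewise ($z_{x,i}\le\beta^{*}\lambda_i$) together with the fact that a summable positive sequence is square-summable. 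Your argument instead estimates (\ref{6}) directly: for $i\in\mathbb{Z}_0$ the product $\prod_{z\in\partial x}\sum_{j}a_{ij}l_{zx}(j)$ equals $\prod_{z}l_{zx}(0)=1$, while for $i=0$ it is a finite product of factors of the form $1+\|z_z\|$, so the whole series is dominated by $C\|\lambda\|<\infty$. This buys simplicity: you never need $\sum_i\lambda_i^2<\infty$, and in fact you do not even need Proposition \ref{Proposition 2.5.}, since Lemma \ref{n1} already gives $\|z_z\|\le\|\lambda\|$ at every vertex, making the $i=0$ factor at most $(1+\|\lambda\|)^{k+1}$; the uniform two-sided bound you call the ``substantive input'' is thus overkill, because normalisability is a per-vertex finiteness condition. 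What the paper's bilinear-form manoeuvre buys in exchange is generality: the identity reducing (\ref{6}) to $\sum_{i,j}Q(i,j)l_{yx}(j)l_{xy}(i)$ is valid for an arbitrary transfer matrix, so that proof template adapts unchanged to other graphs $G$, whereas your case split exploits the specific star-like adjacency of $G^*$. One small slip in your write-up: in the $i=0$ case the inner sum is $1+\|z_z\|$, the norm of the neighbour $z$'s vector, not $1+\|z_x\|$; since your bound is uniform over vertices this does not affect the argument.
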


\begin{proof} By Lemma \ref{n1} we obtain $z_x\in \mathcal{L}^+$. Since
	$$l_{xy}(i)=c_{xy}\prod_{z\in\partial x\setminus \{y\}}\sum_{j\in\mathbb{Z}}Q(i,j)l_{zx}(j)$$
	one gets
	$$\sum_{i\in \mathbb{Z}}\left(\prod_{z\in \partial x}\sum_{j\in \mathbb{Z}}Q(i,j)l_{zx}(j)\right)=\frac{1}{c_{xy}}
	\sum_{i\in\mathbb{Z}}\sum_{j\in\mathbb{Z}}Q(i,j)l_{yx}(j)l_{xy}(i),$$
	where $Q(i,j):=a_{ij}\lambda_i$. Hence, it is sufficient to show that
	$$\sum_{i\in\mathbb{Z}}\sum_{j\in\mathbb{Z}}z_{y, j}Q(i,j)z_{x, i}<\infty.$$
	Indeed, for graph $G^*$ we have
	$$\sum_{i\in\mathbb{Z}}\sum_{j\in\mathbb{Z}}z_{y, j}Q(i,j)z_{x, i}=\sum_{i\in\mathbb{Z}}\sum_{j\in\mathbb{Z}}a_{ij}\lambda_{j}z_{x,i}z_{y,j}=z_{y,0}\sum_{i\in\mathbb{Z}} z_{x,i}\lambda_i.$$
	By Proposition \ref{Proposition 2.5.} we obtain
	$$\sum_{i\in\mathbb{Z}}\sum_{j\in\mathbb{Z}}z_{y, j}Q(i,j)z_{x, i}\leq \beta^{\ast} z_{y,0}\sum_{i\in\mathbb{Z}} \lambda^{2}_i.$$
	It is easy to check that if $\lambda\in \mathcal{L}^+$ then $\sum_{i\in\mathbb{Z}}\lambda_i^2<\infty$.
\end{proof}

\begin{rk} In the sequel of the paper we assume $k\geq 2$ and $\lambda\in \mathcal{L}^+$. Therefore, each boundary law which we construct, by Theorem \ref{tz} and Proposition \ref{lemma new}, defines a Gibbs measure.
\end{rk}

\section{Periodic HC Gibbs measures}

{\it A group representation of the Cayley tree.} \,\ Let $G_{k}$
be a free product of $k+1$ cyclic groups of the second order with
generators $a_{1},a_{2},...a_{k+1},$ respectively. It is known that there exists a one-to-one correspondence between
the set of vertices $V$ of the Cayley tree $\Gamma^{k}$ and elements of the
group $G_{k}$. To give this correspondence we fix an arbitrary element $x_{0}\in
V$ and let it correspond to the unit element $e$ of the group
$G_{k}.$ Using $a_{1},...,a_{k+1}$ we numerate the
nearest-neighbors of element $e$, moving by positive direction.
Now we'll give numeration of the nearest-neighbors of each $a_{i},
i=1,...,k+1$ by $a_{i}a_{j}, j=1,...,k+1$. Since all $a_{i}$ have
the common neighbor $e$ we give to it $a_{i}a_{i}=a_{i}^{2}=e.$
Other neighbor are numerated starting from $a_{i}a_{i}$ by the
positive direction. We numerate the set of all the
nearest-neighbors of each $a_{i}a_{j}$ by words $a_{i}a_{j}a_{q},
q=1,...,k+1,$ starting from $a_{i}a_{j}a_{j}=a_{i}$ by the
positive direction. Iterating this argument one gets a one-to-one
correspondence between the set of vertices $V$ of the Cayley tree
and the group $G_{k}$ (e.g., \cite{5}).\medskip

\begin{defn}\label{defn2.1} Let $K$ be a subgroup of $G_k,$ $k\geq
1.$ We say that a function $h=\{h_x\in \mathbb R: x\in G_k\}$ is $K$-
periodic if $h_{yx}=h_x$ for all $x\in G_k$ and $y\in K.$ A $G_k$-
periodic function $h$ is called translation-invariant.
\end{defn}
A Gibbs measure is called $K$-\emph{periodic} if it corresponds to
$K$-periodic function $h$. Let $G_k: K=\{K_1,..., K_r\}$ be a
family of cosets, $K$ is a subgroup of index $r\in \mathbb{N}.$
\begin{defn}\label{defn 2.3} A set of quantities $h=\{h_x, x\in
G_k\}$ is called $K$-weakly periodic, if $h_x=h_{ij},$ for any
$x\in K_i, \ x_{\downarrow}\in K_j$.
\end{defn}

Let $K$ be a subgroup of index $r$ in $G_{k}$, and let $G_{k} / K=\left\{K_{0}, K_{1}, \ldots, K_{r-1}\right\}$ be the quotient group, with the $\operatorname{coset} K_{0}=K$.

 Put $$q_{i}(x)=\left|S_{1}(x) \cap K_{i}\right|, \quad i=0,1, \ldots, r-1,$$ where  $|\cdot|$ is the number of elements in the set.
  Also, we denote $$Q(x)=\left(q_{0}(x), q_{1}(x), \ldots, q_{r-1}(x)\right).$$

We note (see \cite{Rt} and Theorem 1.13 in \cite{5}) that for every $x \in G_{k}$ there is a permutation $\pi_{x}$ of the coordinates of the vector $Q(e)$ (where $e$ is the identity of $G_{k}$ ) such that
\begin{equation}\label{rb}
\pi_{x} Q(e)=Q(x).
\end{equation}

Let $G_{k}^{(2)}$ denote subgroup of $G_{k}$ consisting all words of even length. This is a normal subgroup of index two.

\begin{thm}\label{th1} Let $H$ be a normal subgroup of finite index in $G_{k}$. Then each $H$-periodic solution of (\ref{9}) for the HC-model with countable set of spin values is either translation-invariant or $G_{k}^{(2)}$- periodic (i.e. with period two). \end{thm}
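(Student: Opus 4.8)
The plan is to exploit the group representation of $\Gamma^k$ and reduce the vectorial system (\ref{9}) to a scalar recursion on the finite quotient $\bar G := G_k/H$. Since $W(z_x)\in\langle\lambda\rangle$, we may write $z_x=\alpha_x\lambda$ with $\alpha_x>0$, so an $H$-periodic solution is encoded by a function $g\mapsto\alpha_g$ on the cosets $g\in\bar G$. Put $\phi(s):=(1+s\|\lambda\|)^{-1}$, and note $\phi(s)^k=f_{\lambda}(s)$ with $f_{\lambda}$ as in (\ref{bob1}). For a vertex $x$ lying in the coset $g=\bar x$ whose reduced word ends in the generator $a_i$, the successors $xa_j$ $(j\ne i)$ lie in the cosets $g\bar a_j$, so (\ref{9}) reads
$$\alpha_{g}=\prod_{j\ne i}\phi\big(\alpha_{g\bar a_j}\big).$$
First I would establish the combinatorial lemma that, for $k\ge 2$, every coset contains reduced words ending in each generator $a_i$: given $x\in g$ ending in $a_q$ with $q\ne i$, choose $p\notin\{i,q\}$ (possible since there are $k+1\ge 3$ generators) and append $(a_p a_i)^{d}$, where $d\ge 1$ is the order of $\overline{a_pa_i}$ in $\bar G$; using normality of $H$ one checks that this word is reduced, ends in $a_i$, and still lies in $g$. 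This is precisely where the hypothesis $k\ge 2$ is used.

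Given the lemma, I would compare the displayed recursion for two distinct last letters $i\ne i'$ occurring in the same coset $g$; cancelling the common factors gives $\phi(\alpha_{g\bar a_{i'}})=\phi(\alpha_{g\bar a_i})$, hence $\alpha_{g\bar a_{i}}=\alpha_{g\bar a_{i'}}$ because $\phi$ is strictly decreasing. Thus all $k+1$ neighbouring values $\alpha_{g\bar a_1},\dots,\alpha_{g\bar a_{k+1}}$ coincide; calling the common value $\beta_g$ we get $\alpha_g=\phi(\beta_g)^k=f_{\lambda}(\beta_g)$. Applying the same identity at the coset $g\bar a_j$ and using $\bar a_j^2=\bar e$ yields $\beta_{g\bar a_j}=\alpha_g$, whence the clean edge relation $\alpha_{g\bar a_j}=f_{\lambda}(\alpha_g)$ for every edge of the Cayley graph of $\bar G$. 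In particular $\alpha_g=f_{\lambda}(f_{\lambda}(\alpha_g))=g_{\lambda}(\alpha_g)$, so every value $\alpha_g$ is a fixed point of $g_{\lambda}$; by the analysis of Section 3 (Theorem \ref{thm4}) this forces $\alpha_g=\xi$ when $\|\lambda\|\le\Lambda_{cr}$, and $\alpha_g\in\{\alpha^{\ast},\xi,\beta^{\ast}\}$ when $\|\lambda\|>\Lambda_{cr}$.

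Finally I would run the dichotomy coming from $f_{\lambda}(\xi)=\xi$, $f_{\lambda}(\alpha^{\ast})=\beta^{\ast}$, $f_{\lambda}(\beta^{\ast})=\alpha^{\ast}$ together with the edge relation $\alpha_{g\bar a_j}=f_{\lambda}(\alpha_g)$ and connectedness of the Cayley graph of $\bar G$. If some coset carries the value $\xi$, then all adjacent cosets carry $f_{\lambda}(\xi)=\xi$, so by connectedness $\alpha_g\equiv\xi$ and the solution is translation-invariant. Otherwise every value equals $\alpha^{\ast}$ or $\beta^{\ast}$ and these strictly alternate along edges. Since word-length parity is a homomorphism $G_k\to\mathbb Z_2$ sending each generator to $1$, the value can depend only on membership in $G_k^{(2)}$, giving a period-two ($G_k^{(2)}$-periodic) solution. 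The delicate point here is consistency of the alternation: if $H$ contained a word of odd length it would close an odd cycle in the Cayley graph of $\bar G$, forcing $f_{\lambda}^{(2m+1)}(\alpha_g)=\alpha_g$ for some $m\ge 0$ and hence $\alpha^{\ast}=\beta^{\ast}$, a contradiction; thus the non-constant case can occur only when $H\subseteq G_k^{(2)}$, exactly where the parity is well defined. I expect the main obstacles to be this combinatorial lemma on last letters and the bipartiteness dichotomy; once they are in place the remainder is only algebra on the already-described fixed points of $g_{\lambda}$.
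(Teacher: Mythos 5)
Your proposal is correct and takes essentially the same route as the paper's own proof: both arguments hinge on showing that inside a fixed coset of $H$ the parent direction can be made to point at any neighbouring coset (your last-letter lemma, proved by appending $(a_pa_i)^d$, plays exactly the role of the paper's pigeonhole argument on cosets together with the permutation property (\ref{rb}) cited from \cite{Rt}), and then on cancelling the common factors of the consistency equation (\ref{9}) at two vertices of the same coset to conclude that all neighbours of a vertex carry one common value, which forces a two-value alternation that is either constant (translation-invariant) or $G_k^{(2)}$-periodic. Your extra steps --- deriving the edge relation $\alpha_{g\bar a_j}=f_{\lambda}(\alpha_g)$, identifying the possible values as fixed points of $g_{\lambda}$, and checking that a non-constant alternation is consistent only when $H\subseteq G_k^{(2)}$ --- go beyond what the paper writes down, but they are sound and only make the same argument more explicit.
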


\begin{proof} Let $H\triangleleft \ G_{k}$ and $G_{k}: H=\left\{H_{0}, H_{1}, H_{2}, \ldots, H_{n}\right\}$, where $H_0=H$. If $h_{1}, h_{2} \in H_{0}$ then $\left|\left\{h_{j} a_{1}, h_{j} a_{2}, \ldots, h_{j} a_{k+1}\right\} \cap H_{i}\right|$ is equal to the number of $H_{i}$ in $\left\{H a_{1}, H a_{2}, \ldots, H a_{k+1}\right\}$, where $i \in\{0,1, \ldots, n\}, j \in\{1,2\}$. Let $a_{i_{1}} H \neq a_{i_{2}} H$ and we consider a family of cosets
$$\{a_{i_1}H, a_{i_2}a_{i_1}H, a_{i_1}a_{i_2}a_{i_1}H,..., \underbrace{a_{i_s}a_{i_{3-s}}... a_{i_1}}_{n+1 \ \textrm{times}}H\}, \ s\in\{1,2\}.$$

Then we can find at least two equal cosets. Hence, there exists $x \in a_{i_{1}}H$ such that $x a_{i_{1}} \in H$. By above facts and (\ref{rb}), we can conclude that if
$$H^{'}, H^{''} \in \left\{yH_i \mid y\in S_1(x), x\in H_i, \ i\in \{0,1,..., n\} \right\},$$
 then there exists $x^{'}, x^{''}\in H_i$ such that $x_{\downarrow}^{'}\in H^{'}$ and $x_{\downarrow}^{''}\in H^{''}$. By using this fact we consider $H$-periodic solutions of (\ref{9}). For any $x_{1}, x_{2}\in H_i, \ i\in \{0, 1,...,n\}$ one gets
\begin{equation}
z_{x_{1}}=\lambda  \prod_{y \in S\left(x_{1}\right)} \frac{1}{1+\left\|z_{y}\right\|}, \quad z_{x_{2}}=\lambda  \prod_{y \in S\left(x_{2}\right)} \frac{1}{1+\left\|z_{y}\right\|}.
\end{equation}
Consequently,
$$1=\frac{z_{x_1}}{z_{x_2}}=\frac{\lambda  \prod_{y \in S\left(x_{1}\right)} \frac{1}{1+\left\|z_{y}\right\|}}{\lambda  \prod_{y \in S\left(x_{2}\right)} \frac{1}{1+\left\|z_{y}\right\|}}=\frac{1+\|z_{x_{1_{\downarrow}}}\|}{1+\|z_{x_{2_{\downarrow}}}\|}.$$
Thus, $\|z_{x_{1_{\downarrow}}}\|=\|z_{x_{2_{\downarrow}}}\|$. Hence, $\left\|z_{x_1}\right\|=\left\|z_{x_2}\right\|=a, \forall x_1, x_2 \in S_{1}(x)$. Analogously, if we denote $z_x:=b, x\in H$ then we obtain that $z_{y^{'}}=b$ for all $y^{'}\in S_{1}(y)$ where $y\in S_1(x)$.
If $a=b$ then $H$-periodic solution is translation-invariant. If $a\neq b$ then $H$-periodic Gibbs measure is $G_{k}^{(2)}$- periodic with period two. \end{proof}

\begin{thm}\label{thm2}\cite{24} Let $k \geq 2$ and $\Lambda_{c r}(k)=\frac{k^{k}}{(k-1)^{k+1}}$. Then for the HC model with a countable number of states (corresponding to the graph $G^{\ast}$), the following statements are true:
\begin{itemize} \item  If $0<\|\lambda\| \leq \Lambda_{cr}$ then there is exactly one $G_{k}^{(2)}-$ periodic Gibbs measure $\mu_{0}$, which is translation invariant;
\item If $\|\lambda\|>\Lambda_{cr}$, then there are exactly three $G_{k}^{(2)}$-periodic Gibbs measures $\mu_{0}, \mu_{1}, \mu_{2}$, where the measures $\mu_{ 1}$ and $\mu_{2}$ are $G_{k}^{(2)}$-periodic (not translation-invariant). \end{itemize}
\end{thm}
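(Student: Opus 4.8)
The plan is to reduce the enumeration of $G_k^{(2)}$-periodic Gibbs measures to the counting of positive solutions of the scalar system (\ref{eq1}), and then to read off the three cases from the dynamics of $f_\lambda$ already recorded in Theorem \ref{thm4}. First I would observe that, by the definition of $G_k^{(2)}$-periodicity (cf. Definition \ref{defn2.1} and Theorem \ref{th1}), any such solution of (\ref{9}) is constant on each of the two cosets of $G_k^{(2)}$; since $W$ maps into $\langle\lambda\rangle$, each value is a positive scalar multiple of $\lambda$, say $z_x=\alpha\lambda$ on one coset and $z_x=\beta\lambda$ on the other. Because every vertex has all $k$ of its direct successors in the opposite coset, substituting into (\ref{9}) gives $\alpha=(1+\beta\|\lambda\|)^{-k}=f_\lambda(\beta)$ and, symmetrically, $\beta=f_\lambda(\alpha)$, which is exactly (\ref{eq1}). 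By Lemma \ref{n1} each such solution lies in $\mathcal{L}^+$, so by Proposition \ref{lemma new} it is normalisable and, via Theorem \ref{tz}, determines a unique Gibbs measure, with distinct ordered pairs $(\alpha,\beta)$ giving distinct measures. Thus the number of $G_k^{(2)}$-periodic measures equals the number of positive solutions $(\alpha,\beta)$ of (\ref{eq1}), the translation-invariant ones being those with $\alpha=\beta$.

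I would then dispose of the translation-invariant case $\alpha=\beta$, where (\ref{eq1}) collapses to the single fixed-point equation $\alpha=f_\lambda(\alpha)$. Since $f_\lambda$ is continuous and strictly decreasing on $(0,1]$ with $f_\lambda(0)=1$ and $0<f_\lambda(x)<1$, its graph crosses the diagonal exactly once, so there is always a unique fixed point $\xi$, giving the single translation-invariant measure $\mu_0$ in both regimes.

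For the genuinely period-two solutions ($\alpha\neq\beta$) I would study the fixed points of the increasing map $g_\lambda=f_\lambda\circ f_\lambda$ from (\ref{bob2}), discarding the common fixed point $\xi$. The bifurcation is controlled by the slope $g_\lambda'(\xi)=f_\lambda'(\xi)^2$. Writing $s=1+\xi\|\lambda\|$, the fixed-point relation gives $\xi=s^{-k}$ and $\|\lambda\|=(s-1)s^{k}$, while the tangency condition $f_\lambda'(\xi)=-1$ reads $\|\lambda\|=s^{k+1}/k$; eliminating $\|\lambda\|$ forces $s=k/(k-1)$ and hence $\|\lambda\|=k^{k}/(k-1)^{k+1}=\Lambda_{cr}$. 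Therefore for $0<\|\lambda\|\le\Lambda_{cr}$ one has $g_\lambda'(\xi)\le1$, and the convexity/concavity of $g_\lambda$ noted before Theorem \ref{thm4} forces $\xi$ to be the only fixed point of $g_\lambda$, so there is no period-two solution and the count is exactly one. For $\|\lambda\|>\Lambda_{cr}$ one has $g_\lambda'(\xi)>1$, so $\xi$ is unstable and the graph of $g_\lambda$ meets the diagonal in exactly three points $\alpha^{*}<\xi<\beta^{*}$; here $\{\alpha^{*},\beta^{*}\}$ is a single period-two orbit of $f_\lambda$, and the two ordered assignments $(\alpha^{*},\beta^{*})$ and $(\beta^{*},\alpha^{*})$ to the cosets produce the two non-translation-invariant measures $\mu_1,\mu_2$, for a total of three.

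The main obstacle is the final sharpness claim: that $g_\lambda$ crosses the diagonal in \emph{at most} three points and in \emph{exactly} three once $\|\lambda\|>\Lambda_{cr}$. I would establish this from the stated convexity of $g_\lambda$ on $(0,\alpha^{*}]\cup[\xi,1]$ and concavity on $(\alpha^{*},\xi)$: a map with this single change of convexity can meet a line at most three times, and the sign change of $g_\lambda'(\xi)-1$ across $\Lambda_{cr}$, together with $g_\lambda(0^{+})=(1+\|\lambda\|)^{-k}>0$ and $g_\lambda(1)<1$, guarantees that the two outer intersections exist precisely in the supercritical regime. Identifying these intersections with the solutions $(\alpha^{*},\beta^{*})$ of (\ref{eq1}) and using uniqueness up to the swap then yields exactly the stated numbers of Gibbs measures.
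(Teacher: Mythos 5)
You should first note that the paper does not actually prove Theorem \ref{thm2}: it is quoted from \cite{24}, and even the underlying scalar facts it rests on --- that (\ref{eq1}) has no off-diagonal positive solution for $\|\lambda\|\le\Lambda_{cr}$ and exactly the swapped pair $(\alpha^{\ast},\beta^{\ast})$, $(\beta^{\ast},\alpha^{\ast})$ for $\|\lambda\|>\Lambda_{cr}$ --- are stated in Section 3 with a pointer to \cite{24}. So your proposal attempts something more self-contained than the paper itself does. Much of it is sound and consistent with the paper's framework: a $G_k^{(2)}$-periodic solution is constant on the two cosets, lies in $\langle\lambda\rangle$, every vertex has all $k$ successors in the opposite coset, so (\ref{9}) collapses to (\ref{eq1}); Proposition \ref{lemma new} and Theorem \ref{tz} then convert solution counts into measure counts (you should also check that the two swapped assignments are not globally proportional boundary laws, which is what makes $\mu_1\neq\mu_2$). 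Your bifurcation computation is correct, and since $|f_\lambda'(\xi)|=k(s-1)/s$ with $\|\lambda\|=(s-1)s^k$ increasing in $s$, the equivalence $g_\lambda'(\xi)\le 1\Leftrightarrow\|\lambda\|\le\Lambda_{cr}$ does hold.

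The genuine gap is exactly where you flagged it: the claim that $g_\lambda$ meets the diagonal in at most three points, which you base on the paper's convexity remark. That remark cannot carry the weight. First, it is itself unproved in the paper (``after simple analyzing'') and is essentially part of what \cite{24} establishes, so your argument becomes circular at its only hard step. Second, the remark is false as stated: if $g_\lambda$ were convex on $[\xi,1]$, then $h(x)=g_\lambda(x)-x$ vanishes at $\xi$ and at $\beta^{\ast}\in(\xi,1)$, and convexity would force $h\ge 0$ on $[\beta^{\ast},1]$, contradicting $h(1)=f_\lambda(f_\lambda(1))-1<0$; so in the supercritical regime $g_\lambda$ is not convex on $[\xi,1]$. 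Third, even granting the stated convex--concave--convex structure, that is two sign changes of $g_\lambda''$ (not the ``single change'' you invoke), and such a function can cross a line four times; to reduce to three you need the extra parity observation that off-diagonal fixed points of $g_\lambda$ come in pairs $\{a,f_\lambda(a)\}$ straddling $\xi$ (so the total number of fixed points is odd), which you never state. Likewise, in the subcritical case the local condition $g_\lambda'(\xi)\le 1$ alone does not exclude a pair of off-diagonal fixed points far from $\xi$; a global argument is again required. The clean repair is to prove the correct global statement directly --- for instance, show that $g_\lambda$ has a single inflection point (convex then concave), which does give at most three diagonal crossings, or carry out the standard algebraic elimination $u=1+\alpha\|\lambda\|$, $v=1+\beta\|\lambda\|$ used for HC models and in \cite{24} --- rather than citing the paper's convexity remark.
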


From Theorem \ref{th1} and Theorem \ref{thm2} we can conclude the following corollary.
\begin{cor}\label{cor3} Let $k \geq 2$ and $\Lambda_{c r}(k)=\frac{k^{k}}{(k-1)^{k+1}}$. Then for the HC model with a countable number of states (corresponding to the graph $G^{\ast}$), the following statements are true:
\begin{itemize} \item  If $0<\|\lambda\| \leq \Lambda_{cr}$ then for any normal subgroup $H$, each $H$-periodic Gibbs measure is translation-invariant and it is unique;
\item If $\|\lambda\|>\Lambda_{cr}$, then there are only three periodic Gibbs measures and two of them are $G_{k}^{(2)}$-periodic and one translation-invariant Gibbs measures. \end{itemize}
\end{cor}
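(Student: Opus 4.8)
The plan is to deduce the corollary directly from Theorem \ref{th1} and Theorem \ref{thm2}, with the bridge between them supplied by the normalisability result of Proposition \ref{lemma new}. First I would recall, via the Remark following Proposition \ref{lemma new}, that every solution $\{z_x\}$ of (\ref{9}) is a normalisable boundary law and therefore, by Theorem \ref{tz}, determines a unique Gibbs measure, while conversely every Gibbs measure arises this way from a boundary law that is unique up to a positive factor. Under the normalization $z_{0,x}=1$ this makes the correspondence between Gibbs measures and solutions of (\ref{9}) a genuine bijection, compatible with the $G_k$-action, so that by Definition \ref{defn2.1} an $H$-periodic Gibbs measure is precisely one attached to an $H$-periodic solution of (\ref{9}). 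Classifying periodic Gibbs measures thus reduces to classifying periodic solutions.

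The central step is then to observe that for an arbitrary normal subgroup $H$ of finite index, Theorem \ref{th1} forces every $H$-periodic solution to be either translation-invariant or $G_k^{(2)}$-periodic with period two. Since $G_k^{(2)}\subset G_k$, a translation-invariant (that is, $G_k$-periodic) solution is in particular $G_k^{(2)}$-periodic; hence, regardless of which normal subgroup $H$ one starts with, every $H$-periodic Gibbs measure lies among the $G_k^{(2)}$-periodic ones. This collapse of the arbitrary-$H$ case to the single subgroup $G_k^{(2)}$ is exactly what lets Theorem \ref{thm2}, which counts only $G_k^{(2)}$-periodic measures, be applied without loss.

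Finally I would split into the two regimes. For $0<\|\lambda\|\le \Lambda_{cr}$, Theorem \ref{thm2} gives that the unique $G_k^{(2)}$-periodic Gibbs measure is the translation-invariant $\mu_0$; combined with the previous paragraph, every $H$-periodic Gibbs measure must coincide with $\mu_0$, yielding both translation-invariance and uniqueness. For $\|\lambda\|>\Lambda_{cr}$, Theorem \ref{thm2} lists exactly the three measures $\mu_0,\mu_1,\mu_2$, with $\mu_0$ translation-invariant and $\mu_1,\mu_2$ properly $G_k^{(2)}$-periodic; these are therefore all the periodic Gibbs measures, two of them $G_k^{(2)}$-periodic and one translation-invariant.

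I do not expect a serious obstacle, since the corollary is essentially a synthesis of two already-established results; the only point requiring genuine care is the reduction step, namely verifying that the count of Theorem \ref{thm2} (phrased for $G_k^{(2)}$-periodicity) really exhausts all $H$-periodic measures for \emph{every} normal $H$ of finite index. That rests entirely on Theorem \ref{th1}, together with the elementary but essential observation that translation-invariance is a special case of $G_k^{(2)}$-periodicity and not a separate alternative to it.
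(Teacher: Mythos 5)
Your proposal is correct and takes essentially the same route as the paper, which states the corollary as a direct consequence of Theorem \ref{th1} and Theorem \ref{thm2}, with the bridge between solutions of (\ref{9}) and Gibbs measures supplied exactly as you describe by Proposition \ref{lemma new} and Theorem \ref{tz} (see the Remark following that proposition). Your explicit note that translation-invariance is a special case of $G_k^{(2)}$-periodicity, so the count in Theorem \ref{thm2} exhausts all $H$-periodic measures, is the one step the paper leaves implicit, and you have filled it in correctly.
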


\section{Bleher-Ganikhodjaev construction}

Let $\Gamma_{0}^{k}=(V^0, L^0)$ be a semi-infinite tree. Note that all above notations in $V$ are the same as the notations in $V^0$.  On the tree $\Gamma_{0}^{k}$ one can introduce a partial ordering, by saying that $y>x$ if there exists a path $x=x_{0}, x_{1}, \ldots, x_{n}=y$ from $x$ to $y$ that ``goes upwards" and the set of vertices $V_{x}^{0}=\left\{y \in V^{0} \mid y \geq x\right\}$ and the edges connecting them from the semi-infinite tree $\Gamma_{x}^{k}$ ``growing" from the vertex $x \in V^{0}$.

We can represent an arbitrary (finite or infinite) path $x^{0}=x_{0}<x_{1}<x_{2}<\ldots$ starting from the point $x^{0}$ by a sequence $i_{1} i_{2} i_{3} \ldots$, where $i_{n}=0,1, \ldots, k-1$ (see \cite{BG} and \cite{5}).

Let $\pi=\left\{x^{0}=x_{0}<x_{1}<\ldots\right\}$ be an infinite path, represented by the sequence $i_{1} i_{2} \ldots$ We assign the real number
$$
t=t(\pi)=\sum_{n=1}^{\infty} \frac{i_{n}}{k^{n}}, \quad 0 \leq t \leq 1
$$
to the path $\pi$. This assignment is a one to one correspondence everywhere except at those numbers $t$ that can be decomposed into a finite sum,
$$
t=\sum_{n=1}^{N} \frac{i_{n}}{k^{n}}, i_{N} \neq 0 .
$$
Denote
$$
Q_{k}=\left\{0<t<1: t=\sum_{n=1}^{N} \frac{i_{n}}{k^{n}}, i_{N} \neq 0\right\} .
$$
If $t \in Q_{k}$, then $t$ is known to be decomposed in two ways:
$$
\begin{gathered}
t=\sum_{n=1}^{N} \frac{i_{n}}{k^{n}}, i_{N} \neq 0, \quad t=\sum_{n=1}^{N-1} \frac{i_{n}}{k^{n}}+\frac{\left(i_{N}-1\right)}{k^{N}}+\sum_{n=N+1}^{\infty} \frac{k-1}{k^{n}} .
\end{gathered}
$$

The first of these sequences is denoted by $\{i_{n}^{(1)}, n=1,2, \ldots\}$ and the second by $\{i_{n}^{(2)}, n=1,2, \ldots\}$.

Let the paths $\pi(t, 1)$ and $\pi(t, 2)$ be represented by the respective sequences $i_{1}^{(1)} i_{2}^{(1)} i_{3}^{(1)} \ldots$ and $i_{1}^{(2)} i_{2}^{(2)} i_{3}^{(2)} \ldots .$
Let $\pi=\left\{x^{0}=x_{0}<x_{1}<\ldots\right\}$ be an infinite path. We assign the set of numbers $z^{\pi}=\left\{z_{x}^{\pi}, x \in V^{0}\right\}$ satisfying equation (\ref{9}) to the path $\pi$. For $x \in W_{n}$, $n=1,2, \ldots$ the set $z^{\pi}$ is unambiguously defined by
\begin{equation}\label{2.50}
z_{x}^{\pi}=\left\{\begin{array}{l}
\alpha^{*}\lambda, \text { if } x \prec x_{n}, x \in W_{n}, \\
\beta^{*}\lambda, \text { if } x_{n} \prec x, x \in W_{n}.
\end{array}\right.
\end{equation}

\begin{thm}\label{THEOREM 2.28.} Let $\Lambda_{cr}<\|\lambda\|<\frac{1}{\beta^{\ast}-\alpha^{\ast}}, \ \lambda\in \mathcal{L}^+$  then for any infinite path $\pi$, there exists a unique set of numbers $z^{\pi}=$ $\left\{z_{x}^{\pi}, x \in V^{0}\right\}$ satisfying equations (\ref{9}) and (\ref{2.50}). \end{thm}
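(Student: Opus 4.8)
The plan is to reduce the problem to a scalar backward recursion along the path and to solve that recursion by a contraction argument, the contraction constant being controlled precisely by the standing hypothesis $\|\lambda\|<\frac{1}{\beta^{\ast}-\alpha^{\ast}}$.

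First I would pass to scalars. Every value produced by the right-hand side of (\ref{9}) is a positive multiple of $\lambda$, so I write $z_{x}^{\pi}=\alpha_{x}\lambda$ with $\alpha_{x}>0$, and (\ref{9}) becomes
$$\alpha_{x}=\prod_{y\in S(x)}\frac{1}{1+\alpha_{y}\|\lambda\|}.$$
The assignment (\ref{2.50}) fixes $\alpha_{x}\in\{\alpha^{\ast},\beta^{\ast}\}$ at every off-path vertex, these being the two values of the $2$-cycle of $f_{\lambda}$ (so that $f_{\lambda}(\alpha^{\ast})=\beta^{\ast}$ and $f_{\lambda}(\beta^{\ast})=\alpha^{\ast}$); compatibility of (\ref{9}) at the off-path vertices is therefore encoded in these two-cycle relations. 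Hence the only genuinely free unknowns are the scalars $\alpha_{n}:=\alpha_{x_{n}}$ attached to the path vertices $x_{n}\in W_{n}$.

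Next I would write the recursion for the $\alpha_{n}$. The children of $x_{n}$ are the next path vertex $x_{n+1}$ together with $k-1$ off-path vertices carrying the fixed phases dictated by (\ref{2.50}). Collecting the off-path contribution into a constant $C_{n}>0$ depending only on $\pi$ and on $\alpha^{\ast},\beta^{\ast}$, equation (\ref{9}) at $x_{n}$ reads
$$\alpha_{n}=\frac{C_{n}}{1+\alpha_{n+1}\|\lambda\|}=:G_{n}(\alpha_{n+1}),$$
so the path values obey an infinite backward recursion $\alpha_{n}=G_{n}(\alpha_{n+1})$ with decreasing Möbius maps $G_{n}$. By Proposition \ref{Proposition 2.5.} every solution of (\ref{9}) satisfies $\alpha^{\ast}\le\alpha_{n}\le\beta^{\ast}$, so it suffices to solve the recursion on the invariant interval $I:=[\alpha^{\ast},\beta^{\ast}]$; a short check, using that each off-path phase lies in $\{\alpha^{\ast},\beta^{\ast}\}$ together with the identities for $f_{\lambda}$, gives $G_{n}(I)\subseteq I$.

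The heart of the argument is a uniform contraction estimate. Differentiating, $|G_{n}'(t)|=C_{n}\|\lambda\|/(1+t\|\lambda\|)^{2}$; eliminating $C_{n}$ through $C_{n}=\alpha_{n}(1+\alpha_{n+1}\|\lambda\|)$ yields, on $I$,
$$|G_{n}'(\alpha_{n+1})|=\frac{\alpha_{n}\|\lambda\|}{1+\alpha_{n+1}\|\lambda\|}\le\frac{\beta^{\ast}\|\lambda\|}{1+\alpha^{\ast}\|\lambda\|}=:\theta,$$
and $\theta<1$ is equivalent to $(\beta^{\ast}-\alpha^{\ast})\|\lambda\|<1$, i.e. to the hypothesis $\|\lambda\|<\frac{1}{\beta^{\ast}-\alpha^{\ast}}$. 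Thus each $G_{n}$ is a $\theta$-contraction of $I$ into itself with a common $\theta<1$. I expect this estimate — recognizing that the admissible range of $\|\lambda\|$ is exactly the contraction threshold — to be the main obstacle; the remaining steps are routine. For each $N$ I set $\alpha_{N}^{(N)}$ equal to the phase value of $x_{N}$ (any point of $I$ suffices) and define $\alpha_{n}^{(N)}:=(G_{n}\circ\cdots\circ G_{N-1})(\alpha_{N}^{(N)})$ for $n<N$. Since $G_{n}\circ\cdots\circ G_{N-1}$ is $\theta^{N-n}$-Lipschitz on $I$, the sequences $(\alpha_{n}^{(N)})_{N}$ are Cauchy and converge, as $N\to\infty$, to limits $\alpha_{n}^{\infty}\in I$; passing to the limit in $\alpha_{n}^{(N)}=G_{n}(\alpha_{n+1}^{(N)})$ shows $\{\alpha_{n}^{\infty}\}$ solves the recursion, and together with the off-path phases this produces a set $z^{\pi}$ satisfying both (\ref{9}) and (\ref{2.50}). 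For uniqueness, any two solutions $\{\alpha_{n}\},\{\alpha_{n}'\}\subset I$ satisfy $|\alpha_{n}-\alpha_{n}'|\le\theta^{N-n}(\beta^{\ast}-\alpha^{\ast})$ for every $N>n$, and letting $N\to\infty$ forces $\alpha_{n}=\alpha_{n}'$ for all $n$.
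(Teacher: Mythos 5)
Your proposal is correct and is essentially the paper's own argument in lightly repackaged form: the reduction to scalars $z_x=\alpha_x\lambda$, the contraction constant $\theta=\beta^{\ast}\|\lambda\|/(1+\alpha^{\ast}\|\lambda\|)$ (whose being $<1$ is exactly the hypothesis $\|\lambda\|<1/(\beta^{\ast}-\alpha^{\ast})$), and the truncate-at-level-$N$ / Cauchy / contraction-uniqueness scheme coincide with the paper's estimate (\ref{2.57}) and the way it is used there. The only differences are presentational: the paper carries finite-volume sets $z_x^{(n)}$ on all of $V_n$ and verifies en route that the off-path values reproduce the phases as in (\ref{2.53}) (a consistency point which you, like the paper, treat rather briskly), and your derivative bound, written at the particular point $\alpha_{n+1}$, should be stated uniformly on $I=[\alpha^{\ast},\beta^{\ast}]$ --- which follows at once from the identity $|G_n'(t)|=G_n(t)\|\lambda\|/(1+t\|\lambda\|)$ together with $G_n(I)\subseteq I$.
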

\begin{proof} On $W_{n}$, we define the set
$$
z_{x}^{(n)}=\left\{\begin{array}{l}
\alpha^{\ast}\lambda, \text { if } x \prec x_{n}, x \in W_{n}, \\
\beta^{\ast}\lambda, \text { if } x_{n} \prec x, x \in W_{n}, \\
z_{x}^{(n)}, \text { if } x=x_{n},
\end{array}\right.
$$
where $z_{x_{n}}^{(n)} \in\left[\alpha^{\ast}\lambda, \beta^{\ast}\lambda\right]$ is an arbitrary number. We extend the definition of $z_{x}^{(n)}$ for all $x \in V_{n}=\cup_{m=0}^{n} W_{m}$ using recursion equations (\ref{9}). Put $z_{x}^{(n)}=\alpha_x^{(n)}\lambda$ and $z_x=\alpha_x \lambda$ then we now prove that the limit
\begin{equation}\label{2.51}
\alpha_{x}=\lim _{n \rightarrow \infty} \alpha_{x}^{(n)}
\end{equation}
exists for every fixed $x \in V^{0}$ and is independent of the choice of $z_{x}^{(n)}$ for $x=x_{n}$. If $x \in W_{n-1}$ and $x \prec x_{n-1}$, then
$$
z_{x}^{(n)}=\lambda\prod_{y \in W_{n}, y>x} \frac{1}{1+\beta^{\ast}\|\lambda\|}=\lambda \left(\frac{1}{1+\beta^{\ast}\|\lambda\|}\right)^k=\alpha^{\ast}\lambda .
$$
Similarly, for $x \in W_{n-1}$ and $x_{n-1} \prec x$, we get $z_{x}^{(n)}=\beta^{\ast}\lambda$. Consequently, for any $x \in W_{m}, m \leq n$ we have
\begin{equation}\label{2.53}
z_{x}^{(n)}=\left\{\begin{array}{l}
\alpha^{\ast}\lambda, \text { if } x \prec x_{m}, \ x \in W_{m},\\
\beta^{\ast}\lambda, \text { if } x_{m} \prec x, \ x \in W_{m}.
\end{array}\right.
\end{equation}
This implies that limit (\ref{2.51}) exists for $x \in W_{m}$ and $x \neq x_{m}$ and
$$
z_{x}=\left\{\begin{array}{l}
\alpha^{\ast}\lambda, \text { if } x \prec x_{m}, \ x \in W_{m}, \\
\beta^{\ast}\lambda, \text { if } x_{m} \prec x, \ x \in W_{m} .
\end{array}\right.
$$
Therefore, we only need to establish that limit (\ref{2.51}) exists for $x=x_{m}$. Let $1 \leq l \leq n$. Then
$$
z_{x_{l-1}}^{(n)}=\lambda\prod_{y \in S(x_{l-1})} \frac{1}{1+\|z_{y}^{(n)}\|}.
$$
Consider two sets $\left\{\bar{z}_{x}^{(n)}:=\bar{\alpha}_{x}^{(n)}\lambda, \ x \in V_{n}\right\}$ and $\left\{\tilde{z}_{x}^{(n)}:=\tilde{\alpha}_{x}^{(n)}\lambda,\ x \in V_{n}\right\}$ which correspond to two values $\bar{z}_{x}^{(n)}$ and $\tilde{z}_{x}^{(n)}$ for $x=x_{n}$, in (\ref{2.50}), then from (\ref{2.53}) we get
\begin{equation}\label{2.54}
\|\tilde{z}_{x_{l-1}}^{(n)}-\bar{z}_{x_{l-1}}^{(n)}\|=\|\lambda\||\bar{\alpha}_{x_{l-1}}^{(n)}-\tilde{\alpha}_{x_{l-1}}^{(n)}|=
\|\lambda\|\prod_{y\in S(x_{l-1})}\left|\frac{1}{1+\bar{\alpha}_{y}^{(n)}\|\lambda\|}-\frac{1}{1+\tilde{\alpha}_{y}^{(n)}\|\lambda\|}\right|.
\end{equation}
Since $\tilde{z}_{y}^{(n)}=\bar{z}_{y}^{(n)}$ for any $y \neq x_{l}, y \in W_{l}$ one gets
$$\|\tilde{z}_{x_{l-1}}^{(n)}-\bar{z}_{x_{l-1}}^{(n)}\|=\|\lambda\|\prod_{y\in S(x_{l-1})\setminus \{x_l\}}\frac{1}{1+\bar{\alpha}_{y}^{(n)}\|\lambda\|}
\left|\frac{1}{1+\bar{\alpha}_{x_l}^{(n)}\|\lambda\|}-\frac{1}{1+\tilde{\alpha}_{x_l}^{(n)}\|\lambda\|}\right|.$$
Since $\alpha^{\ast}\leq\alpha_x\leq\beta^{\ast}, \ x\in V$ we have
$$\|\tilde{z}_{x_{l-1}}^{(n)}-\bar{z}_{x_{l-1}}^{(n)}\|\leq \frac{\|\lambda\|}{(1+\alpha^{\ast}\|\lambda\|)^{k+1}}\|\tilde{z}_{x_{l}}^{(n)}-\bar{z}_{x_{l}}^{(n)}\|=
\frac{\|\lambda\|\beta^{\ast}}{(1+\alpha^{\ast}\|\lambda\|)}\|\tilde{z}_{x_{l}}^{(n)}-\bar{z}_{x_{l}}^{(n)}\|.$$
Let $\theta:=\frac{\|\lambda\|\beta^{\ast}}{(1+\alpha^{\ast}\|\lambda\|)}$ then since $\|\lambda\|<\frac{1}{\beta^{\ast}-\alpha^{\ast}}$ we get $\theta<1$.

Hence $$\left\|\tilde{z}_{x_{l-1}}^{(n)}-\bar{z}_{x_{l-1}}^{(n)}\right\| \leq \theta\left\|\tilde{z}_{x_{l}}^{(n)}-\bar{z}_{x_{l}}^{(n)}\right\|.$$
Iterating this inequality we obtain
\begin{equation}\label{2.57} \quad\left|\tilde{h}_{x_{m}}^{(n)}-\bar{h}_{x_{m}}^{(n)}\right| \leq \theta^{n-m}\left|\tilde{h}_{x_{n}}^{(n)}-\bar{h}_{x_{n}}^{(n)}\right|.\end{equation}
For arbitrary $N, M>n$, we now consider the sets $\left\{z_{x}^{(N)}, x \in V_{N}\right\}$ and $\left\{z_{x}^{(M)}, x \in V_{M}\right\}$ determined by initial conditions of form (\ref{2.50}) for $x \in W_{N}$ and $x \in W_{M}$ respectively and by recursion equations (\ref{9}). We set $\bar{z}_{x_{n}}^{(n)}=z_{x_{n}}^{(N)}, \tilde{z}_{x_{n}}^{(n)}=z_{x_{n}}^{(M)}$. Then inequalities (\ref{2.57}) imply
$$
\left|z_{x_{m}}^{(N)}-z_{x_{m}}^{(M)}\right| \leq \theta^{n-m}\left|z_{x_{n}}^{(N)}-z_{x_{n}}^{(M)}\right| \leq 2 \beta^{*} \theta^{n-m}.
$$
This estimate implies that the sequence $z_{x_{m}}^{(n)}$ satisfies the Cauchy criterion as $n \rightarrow \infty$ for a fixed $m$; therefore, limit (\ref{2.51}) exists and is independent of the choice of $z_{x_{n}}^{(n)}$ in (\ref{2.50}). Because, by construction, the sets $\{z_{x}^{(n)}\}$ satisfy equation (\ref{9}) before taking the limit, so does $\left\{z_{x}\right\}$. The uniqueness of $\left\{z_{x}\right\}$ obviously follows from estimate (\ref{2.57}).\end{proof}

\begin{lemma}\label{2.29} For every $t \in Q_{k}, t=\sum_{n=1}^{N} \frac{i_{n}}{k^{n}}$, the number sets $z^{\pi(t, 1)}$ and $z^{\pi(t, 2)}$ are identical, and $z_{x}^{\pi(t, 2)}=\beta^{*}$ if $x=x_{n}, n \geq N+1$. \end{lemma}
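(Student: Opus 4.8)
The plan is to reduce the statement to the uniqueness clause of Theorem~\ref{THEOREM 2.28.}. Both $z^{\pi(t,1)}$ and $z^{\pi(t,2)}$ are solutions of (\ref{9}) carrying a boundary prescription of the form (\ref{2.50}); if I can show they carry the \emph{same} prescription (equivalently, exhibit one configuration solving (\ref{9}) that is compatible with (\ref{2.50}) for both paths), then Theorem~\ref{THEOREM 2.28.} forces $z^{\pi(t,1)}=z^{\pi(t,2)}$.

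First I would record the geometry of the two expansions of $t$. The digit strings $\{i_n^{(1)}\}$ and $\{i_n^{(2)}\}$ agree for $n\le N-1$, so the two paths share $x_0,\dots,x_{N-1}$; for $n\ge N$ they separate, $\pi(t,1)$ taking the digit $0$ (the leftmost successor) and $\pi(t,2)$ the digit $k-1$ (the rightmost successor). Summing the tail, $\sum_{n>N}(k-1)k^{-n}=k^{-N}$, one sees that the $k$-adic coordinate of $x_m^{(1)}$ is $t$ while that of $x_m^{(2)}$ is $t-k^{-m}$; thus for every $m\ge N$ the vertices $x_m^{(1)}$ and $x_m^{(2)}$ are the two adjacent elements of $W_m$ straddling the common boundary point $t$, with $x_m^{(2)}$ the immediate left neighbour of $x_m^{(1)}$.

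Next I would compare the two prescriptions. By (\ref{2.50}) the value of $z_x^{\pi}$ at a vertex $x\in W_m$ off the path depends only on which side of the path vertex $x$ lies; since $x_m^{(1)}$ and $x_m^{(2)}$ are adjacent, every $x\notin\{x_m^{(1)},x_m^{(2)}\}$ lies on the same side of both, hence gets the same value under the two paths (for $m<N$ this is trivial, the path vertices coinciding). So $z^{\pi(t,1)}$ and $z^{\pi(t,2)}$ can differ only at the straddling vertices $x_m^{(1)},x_m^{(2)}$ with $m\ge N$, and at the ancestors $x_m$ with $m<N$. To settle the straddling vertices I would use (\ref{9}): for $m>N$ the vertex $x_m^{(2)}$ is the rightmost successor of its parent, so all its off-path siblings lie to the left and are already fixed by the previous paragraph; substituting them into (\ref{9}) and using the fixed-point identities $\alpha^{\ast}\lambda=\lambda(1+\beta^{\ast}\|\lambda\|)^{-k}$ and $\beta^{\ast}\lambda=\lambda(1+\alpha^{\ast}\|\lambda\|)^{-k}$ from (\ref{eq1}), one computes $z_{x_m^{(2)}}^{\pi(t,2)}$ and finds it equal to the value (\ref{2.50}) assigns to $x_m^{(2)}$ in $z^{\pi(t,1)}$, namely $\beta^{\ast}\lambda$; the symmetric computation handles $x_m^{(1)}$. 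Once the values on $W_m$ agree for all $m\ge N$, a further application of the recursion (\ref{9}) from level $N$ inward to the root forces the ancestral values $z_{x_m}$, $m<N$, to agree as well, and the two configurations are identical. Equivalently, one may package all of this by defining the ``cut at $t$'' configuration $\hat z$ --- the period-two pattern on each side of $t$, completed through (\ref{9}) on the finitely many common ancestors --- and checking directly that $\hat z$ solves (\ref{9}) and obeys (\ref{2.50}) for each path, so that Theorem~\ref{THEOREM 2.28.} identifies $\hat z$ with both $z^{\pi(t,1)}$ and $z^{\pi(t,2)}$.

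The step I expect to be delicate is the explicit evaluation at the path vertices: one must track the period-two ``phase'' --- the parity of the level --- on each side of $t$ and line it up correctly with the $\alpha^{\ast}/\beta^{\ast}$ labels of (\ref{2.50}), and one must make sure the limiting procedure of Theorem~\ref{THEOREM 2.28.} actually produces these period-two values at the path vertices rather than merely some number in $[\alpha^{\ast}\lambda,\beta^{\ast}\lambda]$. The remaining ingredients --- the adjacency of $x_m^{(1)},x_m^{(2)}$ and the geometric-series computation --- are routine.
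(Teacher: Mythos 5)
Your proposal is correct and is essentially the paper's own proof: the paper argues exactly via your ``packaged'' version, defining the cut-at-$t$ configuration (one extreme value at and on one side of the doubled path, the other extreme value on the other side, extended to the common ancestors $x_m$, $m\le N$, by the recursion (\ref{9})), observing that it solves (\ref{9}) everywhere and is compatible with the prescription (\ref{2.50}) for both $\pi(t,1)$ and $\pi(t,2)$, and then invoking the uniqueness clause of Theorem \ref{THEOREM 2.28.} twice. One bookkeeping caveat: under the left/right convention of (\ref{2.50}), the prescription of $\pi(t,1)$ assigns $\alpha^{\ast}\lambda$, not $\beta^{\ast}\lambda$, to the left straddling vertex $x_m^{(2)}\prec x_m^{(1)}$, and that is also the value the recursion actually forces there once the level-parity ``phase'' is tracked --- so your labels at the straddling vertices are swapped (as, arguably, they are in the lemma's own statement); since the two quantities still agree, the identification closes unchanged, and the parity issue you flag at the end is a genuine delicacy that the paper's own write-up glosses over.
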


\begin{proof} For $n>N$, we set
$$
z_{x}=\left\{\begin{array}{l}
\alpha^{\ast}, \text { if } x \prec x_{n}, \\
\beta^{\ast}, \text { if } x_{m} \preceq x,
\end{array}\right.
$$
and extend the definition of the set $\left\{z_{x}, x \in V^{0}\right\}$ for $x \in V_{N}$, using recursion equations (\ref{9}). Then, it is easy to see that the set $\left\{z_{x}, x \in V^{0}\right\}$ satisfies these equations for all $x \in V^{0}$; further, from Theorem \ref{thm4}, one gets the constructed set $\left\{z_{x}, x \in V^{0}\right\}$ is coincide with $\left\{z_{x}^{\pi(t, 2)}, x \in V^{0}\right\}$. Analogously, the set $\left\{z_{x}, x \in V^{0}\right\}$ is also identical to $\left\{z_{x}^{\pi(t, 2)}, x \in V^{0}\right\}$; therefore, $z^{\pi(t, 1)}=z^{\pi(t, 2)}$. \end{proof}

It follows from Lemma \ref{2.29} that for any point $t \in[0,1]$, the number set $z^{\pi(t)}=$ $\left\{z_{x}^{\pi(t)}, x \in V^{0}\right\}$ is unambiguously defined. Let $z_{0}(t)=z_{x^{0}}^{\pi(t)}$,
where $x^{0}$ is the root vertex of the graph $\Gamma_{0}^{k}$.

\begin{lemma}\label{2.30} The function $z_{0}(t), t \in[0,1]$ is a strictly decreasing continuous function, and $z_{0}(0)=\beta^{*}, z_{0}(1)=\alpha_{*}$. \end{lemma}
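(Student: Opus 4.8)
The plan is to treat $z_0(t)$ as the scalar $\alpha_{x^0}(t)$ determined by $z_{x^0}^{\pi(t)}=\alpha_{x^0}(t)\lambda$ (every $z_x$ lies in $\langle\lambda\rangle$, so this reparametrisation loses nothing), and to prove the three assertions — the endpoint values, continuity, and strict monotonicity — in turn, relying throughout on the existence and uniqueness furnished by Theorem \ref{THEOREM 2.28.} and on the contraction constant $\theta<1$ from its proof. For the endpoints I would observe that $t=0$ is the sequence $i_n\equiv 0$, i.e. the leftmost path, so that every off-path vertex of $W_n$ lies on one fixed side of the path; substituting the resulting boundary family into the down-recursion (\ref{9}) and invoking uniqueness forces $z^{\pi(0)}$ to coincide with the period-two configuration of Theorem \ref{thm2} whose root value is $\beta^{\ast}\lambda$, whence $z_0(0)=\beta^{\ast}$. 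The case $t=1$ (digits $i_n\equiv k-1$, rightmost path) is symmetric and gives $z_0(1)=\alpha^{\ast}$; since $\alpha^{\ast}<\beta^{\ast}$, the endpoint ordering is consistent with a decreasing function.

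Continuity is where the estimate (\ref{2.57}) does the work. If $t,t'\in[0,1]$ share their first $n$ base-$k$ digits, then $\pi(t)$ and $\pi(t')$ share the vertices $x_0,\dots,x_n$ and the two boundary-value families differ only at and above $x_n$; iterating the one-step bound $\|\tilde z^{(m)}_{x_{l-1}}-\bar z^{(m)}_{x_{l-1}}\|\le\theta\,\|\tilde z^{(m)}_{x_l}-\bar z^{(m)}_{x_l}\|$ from level $n$ down to the root gives, exactly as in (\ref{2.57}), $|z_0(t)-z_0(t')|\le 2\beta^{\ast}\theta^{\,n}$. As two reals with the same first $n$ digits satisfy $|t-t'|\le k^{-n}$, and $\theta<1$, this yields (uniform) continuity. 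The finitely ambiguous points $t\in Q_k$ cause no difficulty, because Lemma \ref{2.29} guarantees $z^{\pi(t,1)}=z^{\pi(t,2)}$, so $z_0$ is single-valued there.

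Strict monotonicity is the main obstacle, and I expect it to absorb most of the effort. For $t<t'$ let $j$ be the first index at which the canonical digit expansions differ, so $x_m=x_m'$ for $m<j$ while $x_j\prec x_j'$; passing from $\pi(t)$ to $\pi(t')$ reassigns a non-empty block of off-path subtrees hanging off $x_{j-1}=x_{j-1}'$ from one period-two phase to the other. Because the two phases carry the distinct root values $\beta^{\ast}>\alpha^{\ast}$ and each layer of the down-recursion $\alpha_x=\prod_{y\in S(x)}(1+\alpha_y\|\lambda\|)^{-1}$ is strictly monotone in its arguments, this reassignment should propagate to a strict inequality $z_0(t)>z_0(t')$. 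The delicate point is that a single layer is strictly \emph{decreasing}, so a naive coordinatewise comparison alternates sign with depth; I would control this by comparing the two limiting configurations only through the shared vertices $x^0=x_0,\dots,x_{j-1}$, squeezing both boundary families between the common monotone iterates used in Proposition \ref{Proposition 2.5.}, and extracting strictness from the fact that the two families genuinely differ on a set of positive ``width'' together with $\theta<1$. Combining the three parts shows that $z_0$ is a strictly decreasing continuous function on $[0,1]$ with $z_0(0)=\beta^{\ast}$ and $z_0(1)=\alpha^{\ast}$, as asserted.
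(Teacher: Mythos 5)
Your continuity argument and endpoint identification essentially reproduce the paper's proof: the paper likewise iterates the contraction estimate (\ref{2.57}) with $n=N$, $m=0$ to get $\left\|z_0(t)-z_0(s)\right\|\le 2\beta^{*}\theta^{N}$ when the two paths agree up to level $N$, upgrades this to the H\"older bound $C|t-s|^{\alpha}$ with $\alpha=-\ln\theta/\ln k$, and reads off $z_0(0)=\beta^{*}$, $z_0(1)=\alpha^{*}$ from Lemma \ref{2.29}. Those two parts of your proposal are sound and match the paper.

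The genuine gap is strict monotonicity, which is precisely the part you left as a plan rather than a proof, and the tools you name cannot deliver it. Squeezing both configurations between the common iterates of Proposition \ref{Proposition 2.5.} only recovers the two-sided bound $\alpha^{*}\lambda\preceq z_x\preceq\beta^{*}\lambda$, which holds for \emph{every} solution of (\ref{9}) and therefore cannot separate $z^{\pi(t)}$ from $z^{\pi(s)}$; and the contraction constant $\theta<1$ produces \emph{upper} bounds on differences (that is what drives continuity), never lower bounds, so ``positive width together with $\theta<1$'' cannot force $z_0(t)<z_0(s)$. The paper's mechanism is different and is the missing ingredient: for $t>s$ it chooses $N$ with $k^{N}\sum_{n\le N}j_n/k^{n}-k^{N}\sum_{n\le N}i_n/k^{n}\ge 2$, so that some vertex $z_N$ lies \emph{strictly between} the two paths, $y_N\prec z_N\prec x_N$; by the boundary assignment (\ref{2.50}) the two families are then ordered vertexwise, $z_x^{\pi(t)}\le z_x^{\pi(s)}$ for $x\in W_n$ with strict inequality at the in-between vertices, and this ordering is propagated down the recursion to the root to give $z_{x^0}^{\pi(t)}<z_{x^0}^{\pi(s)}$. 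Your observation that each single layer of (\ref{9}) is order-reversing, so that naive coordinatewise comparisons alternate sign with depth, is a legitimate concern which the paper passes over in silence (a careful write-up would propagate the ordering two levels at a time via the increasing map $g_\lambda=f_\lambda\circ f_\lambda$); but flagging the difficulty is not resolving it, and as written your proposal contains no step that actually produces the strict inequality at the root.
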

\begin{proof} Let
$$
t=\sum_{n=1}^{\infty} \frac{j_{n}}{k^{n}}, \quad s=\sum_{n=1}^{\infty} \frac{i_{n}}{k^{n}}, t>s
$$
Then there exists $N$ such that

$$
k^{N} \sum_{n=1}^{N} \frac{j_{n}}{k^{n}}-k^{N} \sum_{n=1}^{N} \frac{i_{n}}{k^{n}} \geq 2
$$
(otherwise $t=s$ ).

Let $\pi(t)=\left\{x^{0}=x_{0}<x_{1}<x_{2}<\ldots\right\}$ and $\pi(s)=\left\{x^{0}=y_{0}<y_{1}<y_{2}<\ldots\right\}$. Then $y_{N} \prec x_{N}$ and at least one more vertex $z_{N}$ exists between $x_{N}$ and $y_{N}$, i.e., $y_{N} \prec z_{N} \prec x_{N}$. By conditions (\ref{2.50}), this leads to

$$
z_{x}^{\pi(t)} \begin{cases}\leq z_{x}^{\pi(s)}, & x \in W_{n}, \\ <z_{x}^{\pi(s)}, & x=z_{n} .\end{cases}
$$
Setting $n=0$, we obtain $z_{x^{0}}^{\pi(t)}<z_{x^{0}}^{\pi(s)}$, which proves that the function $z_{0}(t)$ is strictly monotonic. We now show it to be continuous. Let
$$
t=\sum_{n=1}^{\infty} \frac{j_{n}}{k^{n}}, s=\sum_{n=1}^{\infty} \frac{i_{n}}{k^{n}},
$$
with
$$
i_{n}=j_{n} \text {, if } n \leq N
$$
Then paths $\pi(t), \pi(s)$ coincide up to the level $N: x_{n}=y_{n}, n \leq N$. We set $\bar{z}_{x_{N}}^{(N)}=$ $z_{x_{N}}^{\pi(t)}, \tilde{z}_{x_{N}}^{(N)}=z_{x_{N}}^{\pi(s)}$. Then, by (\ref{2.57}) with $n=N$ and $m=0$, we get
$$
\left\|z_{0}(t)-z_{0}(s)\right\| \leq \theta^{N} 2 \beta^{*} .
$$
Then it is obvious that
$$
\left\|z_{0}(t)-z_{0}(s)\right\| \leq \theta^{N} 4 \beta^{*}, \text { if }|t-s| \leq k^{-N-1} .
$$
This estimate leads to
$$
\left\|z_{0}(t)-z_{0}(s)\right\| \leq C|t-s|^{\alpha},
$$
where $\alpha=-\frac{\ln \theta}{\ln k}$, which proves the continuity of $z_{0}(t)$. Equalities $z_{0}(0)=\beta^{*}, z_{0}(1)=\alpha^{*}$ follow from Lemma \ref{2.29}.\end{proof}

Hence (similarly to  Bleher-Ganikhodjaev construction), we obtain uncountable number of solutions to (\ref{9}).  Lemma \ref{2.30} implies that the number sets $z^{\pi(t)}$ are distinct for different $t \in[0,1]$. By Proposition \ref{lemma new}, $z^{\pi(t)}$ is a normalisable and by Theorem \ref{tz} there is a one-to-one correspondence between normalisable
boundary laws and Gibbs measures. Thus, we can correspond Gibbs measure $\mu^{t}$ for any solution $z^{\pi(t)}$ to (\ref{9}). Hence, we can conclude the following theorem.

\begin{thm}\label{final} Let $\Lambda_{cr}<\|\lambda\|<\frac{1}{\beta^{\ast}-\alpha^{\ast}}$. Then for any $t\in [0,1]$ there is a Gibbs measure $\mu_t$ corresponding to $z^{\pi(t)}$ such that $\mu_t\neq \mu_s$, $t\neq s$, $s,t\in[0,1]$.
\end{thm}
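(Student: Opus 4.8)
The plan is to assemble ingredients already in place rather than to prove anything fundamentally new: the analytic work is carried out in Theorem \ref{THEOREM 2.28.} and Lemmas \ref{2.29}--\ref{2.30}, and what remains for this theorem is to package those solutions as boundary laws and pass to Gibbs measures through Theorem \ref{tz}.

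First I would fix $t \in [0,1]$ and recall that, by Theorem \ref{THEOREM 2.28.}, to the path $\pi(t)$ there corresponds a set $z^{\pi(t)}=\{z_x^{\pi(t)}\}_{x\in V^0}$ solving the recursion (\ref{9}). When $t\in Q_k$ the two admissible expansions of $t$ a priori give two paths $\pi(t,1),\pi(t,2)$, but Lemma \ref{2.29} guarantees that the associated sets coincide, so $z^{\pi(t)}$ is unambiguously defined for every $t\in[0,1]$. A solution of (\ref{9}) is precisely a boundary law $l=\{l_{xy}:=z_x^{\pi(t)}\}$ for the Hamiltonian (\ref{1}) with graph $G^*$, and since $k\geq 2$ and $\lambda\in\mathcal{L}^+$, Proposition \ref{lemma new} shows this boundary law is normalisable. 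Theorem \ref{tz}(1) then produces a unique Gibbs measure $\mu_t$ corresponding to $z^{\pi(t)}$, which settles the existence half of the statement.

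For the distinctness I would compare the boundary laws at the root. By Lemma \ref{2.30} the map $t\mapsto z_0(t):=z_{x^0}^{\pi(t)}$ is strictly decreasing on $[0,1]$, hence injective; in particular, for $t\neq s$ the boundary laws $z^{\pi(t)}$ and $z^{\pi(s)}$ already differ at the root vertex $x^0$. It then suffices to know that the boundary-law-to-measure correspondence separates distinct normalised boundary laws, and this is exactly the uniqueness clause of Theorem \ref{tz}(2): every Gibbs measure determines its normalisable boundary law uniquely up to a positive constant factor. Since each $z_x^{\pi(t)}$ lies in $\langle\lambda\rangle$ and is normalised at the spin value $0$, no nontrivial scalar can identify $z^{\pi(t)}$ with $z^{\pi(s)}$; were $\mu_t=\mu_s$, the two boundary laws would have to agree up to such a scalar and hence be equal, contradicting $z_0(t)\neq z_0(s)$. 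Thus $\mu_t\neq\mu_s$ whenever $t\neq s$.

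The only genuinely delicate point is this final separation step, so I would phrase it with care: the strict monotonicity of Lemma \ref{2.30} yields distinctness of the root coefficients, and the uniqueness clause of Theorem \ref{tz} converts ``distinct normalised boundary laws'' into ``distinct Gibbs measures''; everything else, namely existence of the $z^{\pi(t)}$, resolution of the dyadic ambiguity, and normalisability, is quoted directly from the preceding results. Since $[0,1]$ is uncountable, the same construction simultaneously delivers the advertised uncountable family of mutually distinct Gibbs measures.
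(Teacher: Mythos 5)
Your proposal is correct and follows essentially the same route as the paper: the paper's justification is precisely the paragraph preceding the theorem, which assembles Theorem \ref{THEOREM 2.28.}, Lemmas \ref{2.29}--\ref{2.30}, Proposition \ref{lemma new}, and Theorem \ref{tz} exactly as you do. If anything, your handling of the ``unique up to a positive constant factor'' clause of Theorem \ref{tz} in the distinctness step is more careful than the paper's, which simply invokes a one-to-one correspondence between normalisable boundary laws and Gibbs measures.
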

\section*{Acknowledgements}
The work supported by the fundamental project (number: F-FA-2021-425)  of The Ministry of Innovative Development of the Republic of Uzbekistan.

\section*{Statements and Declarations}

{\bf	Conflict of interest statement:}
On behalf of all authors, the corresponding author (F. Haydarov) states that there is no conflict of interest.

\section*{Data availability statements}
The datasets generated during and/or analysed during the current study are available from the corresponding author on reasonable request.

\end{document}